\documentclass[11pt]{amsart}

\usepackage{amsmath,amssymb,amsthm}
\usepackage[hidelinks]{hyperref}

\newcommand{\bit}[1]{\mathrm{bit}_{#1}}
\newcommand{\sd}{s_2}
\newcommand{\Mtop}{M}
\newcommand{\tcount}{T}
\newcommand{\bcount}{B}
\newcommand{\pcx}{p}          % subword complexity p_w(n)
\newcommand{\bwC}{C}          % Baker--Wuestholz constant C(n,d)
\newcommand{\mh}{h'}          % modified height
\newcommand{\N}{\mathbb{N}}

\newcommand{\Z}{\mathbb{Z}}
\newcommand{\Q}{\mathbb{Q}}

\theoremstyle{plain}
\newtheorem{theorem}{Theorem}[section]
\newtheorem{theoremintro}{Theorem}

\newtheorem{proposition}[theorem]{Proposition}
\newtheorem{lemma}[theorem]{Lemma}

\theoremstyle{definition}
\newtheorem{definition}[theorem]{Definition}

\theoremstyle{remark}
\newtheorem{remark}[theorem]{Remark}

\numberwithin{equation}{section}

\begin{document}

\title[Aperiodicity and subword complexity of $3^m$ in binary]{Aperiodicity and
subword complexity\\ in the binary expansion of powers of three}
\author{Ralf Stephan}
\thanks{Institute for Globally Distributed Open Research and Education (IGDORE)}

\date{\today}

\begin{abstract}
We prove two results on the fine structure of the binary digits of $3^{m}$.
First, for every fixed period $p$, the number of positions at which the binary
expansion of $3^{m}$ breaks $p$-periodicity grows in order like
$\log m/\log\log m$; equivalently, no window of the expansion deeper than a fixed
power of $\log m$ is $p$-periodic. Second, the finite binary word formed by the
low-order digits of $3^{m}$ meets the Morse--Hedlund floor: its complexity
function satisfies $\pcx_{3^{m}}(n)\ge n+1$ for every length $n$, once $m$ is
large enough.
\end{abstract}

\maketitle

%======================================================================
\section{Introduction}\label{sec:intro}
%======================================================================

Write a positive integer $N$ in binary, $N=\sum_{i\ge 0}\bit{i}(N)\,2^{i}$ with
$\bit{i}(N)\in\{0,1\}$, and let
\[
\Mtop(N):=\lfloor\log_{2}N\rfloor,\qquad
\sd(N):=\sum_{i\ge 0}\bit{i}(N),
\]
so that $N$ has $\Mtop(N)+1$ binary digits and $\sd(N)$ is its number of $1$-bits.
This note concerns the arrangement of the binary digits of $N=3^{m}$. Throughout,
``$\log$'' is the natural logarithm.

We use two further statistics of the digit string. A \emph{digit change} of $N$
is an index $i<\Mtop(N)$ with $\bit{i}(N)\neq\bit{i+1}(N)$; writing $N$ as a
concatenation of maximal runs of equal digits (its \emph{blocks}), the number of
digit changes is one less than the number of blocks. For a fixed period
$p\ge 1$, a \emph{period-$p$ break} of $N$ is an index $i<\Mtop(N)$ with
$\bit{i}(N)\neq\bit{i+p}(N)$.

\begin{definition}\label{def:transition}
For $N\ge 1$, let
\begin{gather*}
\tcount(N):=\#\{\,i<\Mtop(N):\bit{i}(N)\neq\bit{i+1}(N)\,\},\\
\bcount_{p}(N):=\#\{\,i<\Mtop(N):\bit{i}(N)\neq\bit{i+p}(N)\,\}.
\end{gather*}
Thus $\tcount(N)$ is the number of digit changes of $N$ (its number of blocks is
$\tcount(N)+1$), and $\bcount_{p}(N)$ is its number of period-$p$ breaks; note
$\bcount_{1}=\tcount$.
\end{definition}

Finally, for a word $w=(w_{0},w_{1},\dots)$ over a finite alphabet, the
\emph{subword complexity} (or \emph{factor complexity}) $\pcx_{w}(n)$ is the
number of distinct factors of length $n$ occurring in $w$. We read the low bits
of $3^{m}$ as the finite word
$w=(\bit{0}(3^{m}),\dots,\bit{\Mtop-1}(3^{m}))$, $\Mtop=\Mtop(3^{m})$, and write
$\pcx_{3^{m}}(n)$ for its complexity: the number of distinct length-$n$ factors at
the positions $0,1,\dots,\Mtop-n$.

\subsection{Known results}\label{sub:known}

The starting point is a theorem of Stewart on integers with few digits in two
bases. In the notation of \cite{Ste80}, $\sd(N)$ is the number of nonzero binary
digits, and Stewart's Theorem~2, applied to the recurrence $u_{n}=3^{n}$ with
base $2$, gives the following.

\begin{theorem}[Stewart \cite{Ste80}]\label{thm:stewart}
There is a constant $C>0$ such that
\[
\sd(3^{m})\ \ge\ \frac{\log m}{\log\log m+C}-1
\qquad\text{for every } m\ge 2 .
\]
In particular $\sd(3^{m})\to\infty$: no power of three eventually has sparse
binary digits.
\end{theorem}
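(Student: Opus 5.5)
Since this is Stewart's theorem, I will reduce it to a lower bound for a linear form in logarithms (Baker--W\"ustholz) combined with an induction on the number of nonzero digits, as in \cite{Ste80}. Let $k=\sd(3^m)$ and write
\[
3^m=2^{a_1}+2^{a_2}+\dots+2^{a_k},\qquad a_1>a_2>\dots>a_k\ge 0 .
\]
Since $3^m$ is odd we have $a_k=0$, and $a_1=\Mtop(3^m)$, so $a_1\asymp m$. The aim is to show that the gaps $a_j-a_{j+1}$ cannot all be large. For each $j$ I compare $3^m$ with its top $j$ terms $2^{a_1}+\dots+2^{a_j}=2^{a_j}u_j$, where $u_j$ is an integer with $u_j<2^{a_1-a_j+1}$. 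Then
\[
0<\bigl|\,3^m\,2^{-a_j}u_j^{-1}-1\bigr|\ \le\ 2^{a_{j+1}+1-a_j}.
\]
The left side is $|e^{\Lambda_j}-1|$ with $\Lambda_j=m\log 3-a_j\log 2-\log u_j$. This quantity is nonzero for $j<k$, because $3^m$ is not of the form $2^{a}u$ with the truncated sum.

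\textbf{Step 1 (lower bound).} By Baker--W\"ustholz with three logarithms, $\log 3,\log 2,\log u_j$, where $\log u_j$ has height $\ll a_1-a_j$, we get
\[
\log|\Lambda_j|\ \ge\ -\bwC\,(a_1-a_j+1)\log m .
\]

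\textbf{Step 2 (gap recursion).} Comparing this with the upper bound gives
\[
a_j-a_{j+1}\ \le\ C_1\,(a_1-a_j+1)\log m .
\]
Setting $D_j=a_1-a_j+1$, this says $D_{j+1}\le (1+C_1\log m)\,D_j$. Starting from $D_1=1$, we obtain $D_k\le (C_2\log m)^{k-1}$. But $D_k=a_1+1\gg m$. Taking logarithms,
\[
\log m - O(1)\le (k-1)(\log\log m+C),
\]
which rearranges to $k\ge \log m/(\log\log m+C)+1-o(1)$. This is in fact slightly stronger than the stated bound with the $-1$. Small $m$ are absorbed by adjusting $C$: for $m\ge 2$ the right side of the theorem is below $1\le\sd(3^m)$ once $C$ is large.

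\textbf{Main obstacle.} The hard step is Step 1. The difficulty is getting the dependence of the linear-forms bound on the height of $u_j$ to be \emph{linear} in $a_1-a_j$, with only a single factor $\log m$ coming from the coefficient size $\max(m,a_j)$. This is exactly the shape of Baker--W\"ustholz, $\log|\Lambda|\ge -\bwC(n,d)\,h'(\alpha_1)h'(\alpha_2)h'(\alpha_3)\log B$, with $h'(2),h'(3)=O(1)$. I would cite it in that form, and check that nonvanishing of $\Lambda_j$ follows from uniqueness of the binary expansion.
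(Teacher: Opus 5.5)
Your proof is correct, and its arithmetic core is the paper's zero-run estimate. It is a three-term Baker--W\"ustholz form $m\log 3-y\log 2-\log A$, with $A$ the digits above a run of zeros. The form is nonzero because the digits below the run do not all vanish: your positive tail $2^{a_{j+1}}+\dots+2^{a_k}$ plays the role of the paper's remark that the low part of $3^m$ is odd. The upper bound comes from $0<\Lambda_j\le e^{\Lambda_j}-1$, a step you leave implicit but which holds because $3^m>2^{a_j}u_j$.

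Where you differ is the counting. The paper quotes the theorem from \cite{Ste80}, and its formalized proof never orders the $1$-bits. It fixes a ratio $\theta$ with $\kappa\log m<\theta\log 2$ and uses the zero-run estimate to place a $1$-bit in each window $[\Mtop-\theta^{j+1},\Mtop-\theta^{j})$. It then feeds $\lfloor\log_{\theta}\Mtop\rfloor\le\sd(3^{m})$ to Proposition~\ref{prop:endgame}. You instead iterate $D_{j+1}\le(1+C_1\log m)D_j$ along consecutive nonzero digits.

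Your route is more direct and slightly sharper. Since $D_k=\Mtop+1>m$ exactly, you get $\log m<(k-1)\log(1+C_1\log m)\le(k-1)(\log\log m+C)$ with $C=\log(C_1+1/\log 2)$. Here $\log\log m+C>0$ automatically for $m\ge 2$. This gives $\sd(3^m)>\log m/(\log\log m+C)+1$ for every $m\ge 2$, so your $O(1)$, $o(1)$ and ``absorb small $m$'' hedges can simply be dropped.

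The window scheme is a little lossier in the offset: it pays for the floor in $\lfloor\log_\theta\Mtop\rfloor$, and the top bit lies in no window. In exchange it is modular. The counting lemma receives all the Baker input as a hypothesis and is reused verbatim for $\tcount(3^m)$ and $\bcount_p(3^m)$, whose witnesses are digit changes and period-$p$ breaks rather than exponents in a sum of powers of $2$.

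One small remark: in your upper bound you use only $u_j\ge 1$. Keeping $u_j\ge 2^{a_1-a_j}$ gives $\Lambda_j\le 2^{a_{j+1}+1-a_1}$, which is the depth form $(\Mtop-x)\log 2\le\cdots$ of the paper's run estimate, though your gap form already suffices for the recursion.
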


Stewart's proof rests on Baker's theory of linear forms in logarithms; in the
present work the corresponding input is the following effective theorem of Baker
and Wüstholz \cite{BW93}, stated in Section~\ref{sec:tools}. For an embedding
$\varphi\colon K\to\mathbb{C}$ of a number field $K$ of degree $d$, write
$\mh(\alpha)$ for the modified height of $\alpha\in K$ and $\bwC(n,d)$ for the
Baker--Wüstholz constant; the theorem asserts that a nonzero linear form
$\Lambda=\sum_{i}b_{i}\log\varphi(\alpha_{i})$ with integer coefficients bounded
by $B\ge 2$ obeys $\log|\Lambda|\ge-\bwC(n,d)\max(\log B,\tfrac1d)\prod_{i}
\mh(\alpha_{i})$ (Theorem~\ref{thm:bw}).

Counting the $1$-bits says nothing about their placement, and it is natural to
count instead the blocks of $3^{m}$. That the number of blocks tends to infinity
is a theorem of Blecksmith, Filaseta and Nicol \cite{BFN93}, who introduced this
quantity; an effective, and faster, lower bound follows from work of Bugeaud and
Kaneko. Since $3^{m}$ is an integral $S$-unit with $S=\{3\}$, their Corollary~1.5
and the block variant recorded in their Remark~4.4 apply.

\begin{theorem}[Blecksmith--Filaseta--Nicol; Bugeaud--Kaneko]\label{thm:blocks}
The number of binary blocks of $3^{m}$ tends to infinity, and grows in order at
least like $\log m/\log\log m$; equivalently
\[
\tcount(3^{m})\ \ge\ \frac{\log m}{\log\log m+C}-1
\]
for a constant $C>0$ and all $m\ge 2$.
\end{theorem}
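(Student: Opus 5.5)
The bound for $\tcount(3^{m})$ comes from the same Baker-type argument as Theorem~\ref{thm:stewart}, with blocks of digits in place of individual $1$-bits. The plan is a downward induction on the number of blocks, using Theorem~\ref{thm:bw} to control gap ratios.

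\textbf{Block representation.} Suppose $3^{m}$ has $k=\tcount(3^{m})+1$ blocks. Let $0=a_{0}<a_{1}<\dots<a_{k}=\Mtop+1$ be the block boundaries. Each block of $1$'s occupying positions $[a_{j},a_{j+1})$ contributes $2^{a_{j+1}}-2^{a_{j}}$. Hence
\[
3^{m}=\sum_{j=1}^{k}\varepsilon_{j}2^{a_{j}},\qquad \varepsilon_{j}\in\{-1,0,1\},
\]
with at most $k+1$ nonzero terms. So $3^{m}$ is a signed sum of at most $k+1$ powers of two. Stewart's argument works equally well with signs, so it suffices to prove the following claim. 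If $3^{m}=\sum_{j\le r}\pm 2^{c_{j}}$ with $c_{1}>\dots>c_{r}\ge 0$, then $r\ge \log m/(\log\log m+C)$.

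\textbf{Gap argument.} Put $c_{1}\approx m\log_{2}3$. For each $t<r$, truncate the sum after its top $t$ terms:
\[
\Bigl|3^{m}-\sum_{j\le t}\pm2^{c_{j}}\Bigr|\le r\,2^{c_{t+1}+1}.
\]
Factor out $2^{c_{t}}$. The head is then $2^{c_{t}}Q_{t}$, where $Q_{t}$ is an integer of size at most $2^{c_{1}-c_{t}+1}$. Dividing through gives
\[
\Lambda_{t}=m\log3-c_{t}\log2-\log Q_{t},
\qquad |\Lambda_{t}|\ll r\,2^{-(c_{t}-c_{t+1})}\,2^{-(c_{1}-c_{t})}\cdot(\text{const}).
\]
Here $\Lambda_{t}\neq0$ by unique factorization: $3^{m}$ is odd and not a power of two, and one can arrange $Q_{t}$ odd.

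Apply Theorem~\ref{thm:bw} with $n=3$, $d=1$, $B\asymp m$ and $\mh(Q_{t})\ll c_{1}-c_{t}+1$. This gives
\[
c_{t}-c_{t+1}\ \le\ C_{0}\,\log m\,(c_{1}-c_{t}+1)
\]
for an absolute constant $C_{0}$. Set $g_{t}=c_{1}-c_{t}+1$. Then $g_{t+1}\le(1+C_{0}\log m)\,g_{t}$, and by induction $g_{r}\le (C_{1}\log m)^{r}$.

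\textbf{Closing the bound.} The lowest term must satisfy $c_{r}=0$ essentially, since $3^{m}$ is odd. Therefore $g_{r}\approx c_{1}\asymp m$, and so $m\ll (C_{1}\log m)^{r}$. Taking logarithms gives $r\ge \log m/(\log\log m+C)$. Since $r\le k+1=\tcount(3^{m})+2$, this yields the stated inequality after adjusting $C$. The small values of $m$ are absorbed into $C$ and the $-1$.

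The statement that $\tcount(3^{m})\to\infty$ follows immediately from the bound. Alternatively, it can be quoted directly from \cite{BFN93}, or from Corollary~1.5 and Remark~4.4 of Bugeaud--Kaneko applied to the $S$-unit $3^{m}$ with $S=\{3\}$.

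\textbf{Main obstacle.} I expect the hardest step to be justifying $\Lambda_{t}\neq0$ and the height bound uniformly in $t$. When the truncated head nearly equals $3^{m}$, the factor $Q_{t}$ must be handled carefully, and the signs can produce cancellation, for example $Q_{t}=0$ in degenerate cases. I would treat this by merging any adjacent terms that cancel; the block structure already prevents this, since distinct block endpoints give distinct exponents.
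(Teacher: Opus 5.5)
Your proof is correct, but it is organized differently from the paper's. The paper cites the result; its own verified proof uses the window scheme. A constant-run estimate (a run of equal bits on $[x,y)$ forces $(\Mtop-x)\log 2\le\bwC(3,1)\log 3\,(\Mtop+1-y)\max(\log 2m,1)$) shows that no geometric window $[\Mtop-\theta^{j+1},\Mtop-\theta^{j})$ with $\theta\asymp\log m$ is constant. So each of these disjoint windows contains two adjacent unequal bits, and Proposition~\ref{prop:endgame} turns $\lfloor\log_{\theta}\Mtop\rfloor\le\tcount(3^{m})$ into the bound.

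You instead rewrite $3^{m}$ as a signed sum of exactly $r=\tcount(3^{m})+2$ distinct powers of two. You then run Stewart's original chain $g_{t+1}\le(1+C_{0}\log m)\,g_{t}$ from $g_{1}=1$ up to $g_{r}=c_{1}+1>m$. Both routes rest on the same three-term form in $\log 2,\log 3,\log Q$, kept nonzero by parity.

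The obstacle you anticipate does not occur. $Q_{t}$ is automatically odd. It is positive because $|3^{m}-2^{c_{t}}Q_{t}|<2^{c_{t+1}+1}\le 2^{c_{t}}$. And $3^{m}\neq 2^{c_{t}}Q_{t}$ since $c_{t}\ge 1$ for $t<r$. Two small caveats on your bound for $|\Lambda_{t}|$. It uses $3^{m}\ge 2^{c_{1}-1}$, which holds for the block representation (where $c_{1}=\Mtop+1$) but not for arbitrary signed sums. It also needs $c_{1}-c_{t+1}$ above a small constant, below which the gap inequality is trivial anyway.

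What your route buys is a single self-contained chain that handles runs of $0$'s and $1$'s at once through the signs. Since the chain has $r-1=\tcount(3^{m})+1$ steps, it produces the offset $-1$ directly, with no need to adjust $C$; your $g_{r}\le(C_{1}\log m)^{r}$ simply wastes the step $g_{1}=1$.

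What the paper's route buys is modularity. The transcendence input lives in a one-window estimate and the counting in a digit-free lemma, so one scheme serves Theorems~\ref{thm:stewart}, \ref{thm:blocks} and~\ref{thm:aperiodic}. Your signed-sum reduction is specific to constant runs and has no counterpart for period-$p$ breaks. There a periodic stretch is a geometric series and must go through the four-term form of Lemma~\ref{lem:gapP}.
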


Here the order $\log m/\log\log m$ comes from
$\log\log(3^{m})=\log m+O(1)$ in \cite[Cor.~1.5, Rem.~4.4]{BK17}. On the opposite,
sparse side, the powers of three with few nonzero bits are known exactly in the
relevant range. The case $\sd\le 2$ is elementary; the case $\sd=3$ is a theorem
of Dimitrov and Howe.

\begin{theorem}[Dimitrov--Howe \cite{DH23}]\label{thm:DH}
The equation $3^{m}=2^{a}+2^{b}+1$ with $1\le b<a$ has the unique solution
$(m,b,a)=(4,4,6)$. More generally \cite[Thm.~1.1]{DH23}, the powers of three with
at most $22$ nonzero binary digits are exactly $3^{0},\dots,3^{25}$.
\end{theorem}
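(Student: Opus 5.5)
This statement is quoted from \cite{DH23}; in the paper it is invoked only by citation. The following is how I would reprove it, treating the case $\sd=3$ by hand and the general case by a Baker bound followed by a $2$-adic sieve.

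\emph{The equation $3^{m}=2^{a}+2^{b}+1$.} I would start from $3^{m}-1=2^{b}(2^{a-b}+1)$ with $a-b\ge1$, so $2^{a-b}+1$ is odd and $b=v_{2}(3^{m}-1)$.
\begin{itemize}
\item If $m$ is odd, then $v_{2}(3^{m}-1)=1$, so $b=1$. The equation becomes $3(3^{m-1}-1)=2^{a}$, which is impossible because $3\nmid 2^{a}$.
\item Hence $m=2k$ is even. The lifting-the-exponent lemma gives $b=v_{2}(k)+3$, so in particular $2^{b}\le 8k$.
\item The equation then reads $3^{2k}-2^{a}=1+2^{b}$, where the right side is at most $1+8k$, i.e.\ only logarithmic in the size of $3^{m}$. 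So $2^{a}$ is an extremely good approximation to $3^{m}$.
\end{itemize}

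To finish this case I would work modulo a power of $2$, or modulo primes $q$ for which the orders of $2$ and $3$ are small. Reading the equation modulo $2^{b+j}$ for successive $j$ pins down $k$ modulo growing powers of $2$. Combined with the constraint $b=v_{2}(k)+3$, and with congruences modulo $5,7,13,17,\dots$ that relate $a$ and $m$, this should force $k=2$. That value gives $3^{4}=81=64+16+1$, i.e.\ $(m,b,a)=(4,4,6)$.

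If these elementary congruences do not close the argument, the fallback is as follows. Apply Theorem~\ref{thm:bw} to $\Lambda=a\log2-m\log3$, noting that $|\Lambda|\approx 2^{b}/3^{m}\le 8m\cdot3^{-m}$. This gives an explicit upper bound on $m$. A continued-fraction reduction of $\log3/\log2$ then cuts that bound down to a small range, which can be checked directly.

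\emph{The general statement ($\sd(3^{m})\le22$).} Write $3^{m}=\sum_{j=1}^{s}2^{e_{j}}$ with $s\le22$ and $e_{1}<\dots<e_{s}$.
\begin{enumerate}
\item Iterate Baker--Wüstholz over the gaps $e_{j+1}-e_{j}$, as in Stewart's proof of Theorem~\ref{thm:stewart}. At each stage, the linear form $e_{s}\log2-m\log3$, corrected by the already-bounded lower part of the expansion, shows that some gap is $O\bigl((\log m)^{c}\bigr)$. Chaining these bounds over the $s\le 22$ ones bounds $m$ explicitly, though astronomically.
\item To reduce this bound, use that the low $t$ bits of $3^{m}$ determine $m$ modulo $2^{t-2}$, since $3$ generates a large cyclic subgroup of $(\Z/2^{t}\Z)^{\times}$.
\item Build a tree of residues of $m$ modulo $2^{t}$, keeping only those residues for which $3^{m}\bmod 2^{t}$ has at most $22$ one-bits. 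Show that every branch except the ones corresponding to $m\le25$ dies before $2^{t}$ exceeds the Baker bound.
\end{enumerate}

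The main obstacle is step~3: controlling the branching of this $2$-adic sieve so that the search is finite and feasible. A priori the number of sparse residues could grow with $t$, and it is precisely the computational verification in \cite{DH23} that shows it does not.
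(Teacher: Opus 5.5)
The paper gives no proof of this statement: it is quoted from \cite{DH23}, and the formalization takes the three-bit case as the cited axiom \texttt{DH.three\_pow\_three\_binary\_digits}. Your sketch therefore has to stand on its own.

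For the equation $3^{m}=2^{a}+2^{b}+1$ it does. Your parity and lifting-the-exponent reduction to $m=2k$, $b=v_{2}(k)+3$, $0<3^{m}-2^{a}\le 1+4m$ is correct. Your fallback is a complete, standard argument: apply Theorem~\ref{thm:bw} with two logarithms to $0<m\log 3-a\log 2\le 2(1+4m)3^{-m}$ (valid for $m\ge 4$), then do a Legendre/continued-fraction reduction for $\log 3/\log 2$, then check the few remaining even $m$ directly. Present that as the proof. The congruence route (``this should force $k=2$'') is only a hope, and its $2$-adic part can never produce a contradiction by itself, for the reason below.

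The general case has a genuine gap at step~3, and it is structural, not a matter of computing power. For $t\ge 3$ one has $v_{2}(3^{2^{t-2}}-1)=t$, so $3^{2^{t-2}}\equiv 1+2^{t}\pmod{2^{t+1}}$. Since $3^{r}$ is odd, $3^{r+2^{t-2}}\equiv 3^{r}+2^{t}\pmod{2^{t+1}}$: the two lifts of a residue $r\bmod 2^{t-2}$ share their low $t$ bits and have complementary bit $t$. So every live node has a child with the same number of one-bits, and no branch of your tree ever dies.

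More precisely, $r\mapsto 3^{r}\bmod 2^{t}$ is a bijection from $\Z/2^{t-2}\Z$ onto the residues $\equiv 1,3\pmod 8$. Hence the live nodes at level $t$ correspond exactly to those residues with at most $22$ one-bits, a set of size of order $t^{21}$. The assertion you attribute to \cite{DH23}, that the number of sparse residues does not grow, is therefore provably false. When $2^{t-2}$ finally exceeds your Baker bound, you are left with astronomically many candidates $m=r$, each requiring the full binary weight of $3^{r}$.

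The underlying reason is that $2$-adic information is always consistent: every positive integer $L\equiv 1,3\pmod 8$ with at most $22$ one-bits is $3^{\nu}$ for some $\nu\in\Z_{2}$. What is missing is an ingredient that couples the low-order bits to the archimedean size of $3^{m}$ strongly enough to prune the search. Whatever \cite{DH23} does, it is not this sieve, and without such an ingredient the second sentence of the statement can only be cited, as the paper does.
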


\begin{theorem}[sparse powers of three]\label{thm:sparse}
$\sd(3^{m})\le 2$ if and only if $m\le 2$, and $\sd(3^{m})=3$ if and only if
$m=4$. Explicitly $3^{0}=1_{2}$, $3^{1}=11_{2}$, $3^{2}=1001_{2}$ have
$\sd\le 2$, and $3^{4}=1010001_{2}$ is the unique power of three with
$\sd=3$.
\end{theorem}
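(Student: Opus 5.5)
The plan is to handle the two parts of Theorem~\ref{thm:sparse} separately: an elementary congruence argument for $\sd\le 2$, and a reduction to Theorem~\ref{thm:DH} for $\sd=3$. The explicit binary expansions of $3^0,3^1,3^2,3^4$ are direct computations. One also checks $3^3=27=11011_2$ (four $1$-bits), so $m=3$ is excluded from both cases.

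For $\sd(3^m)\le 2$, note $\sd(3^m)=1$ would force $3^m=2^a$, which holds only for $m=0$. So suppose $3^m=2^a+1$ with $a\ge 1$; the low bit is $1$ because $3^m$ is odd. For $m\le 2$ this gives $a=1$ or $a=3$. For $m\ge 3$, the case $a=1$ is impossible, and if $a\ge 2$ then $3^m\equiv 1\pmod 4$, so $m$ is even. Write $m=2k$; then $(3^k-1)(3^k+1)=2^a$. Both factors are powers of two differing by $2$, so they are $2$ and $4$, giving $k=1$ and $m=2$. (Alternatively, one can cite Catalan/Mihăilescu, but this is not needed.)

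For $\sd(3^m)=3$, write $3^m=2^a+2^b+1$ with $1\le b<a$, again using that $3^m$ is odd. Theorem~\ref{thm:DH} says the unique solution is $(m,b,a)=(4,4,6)$, and indeed $81=64+16+1$. So $\sd(3^m)=3$ forces $m=4$, and conversely $\sd(3^4)=3$. As a consistency check against the general statement, Theorem~\ref{thm:DH} shows every $3^m$ with $m\ge 26$ has $\sd(3^m)\ge 23$. The remaining cases $m\le 25$ can be checked by finite computation, and $\sd\le 3$ occurs only for $m\in\{0,1,2,4\}$.

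There is no real obstacle, since the hard case is exactly the cited Dimitrov--Howe result. The only care needed is bookkeeping: the odd low bit must be accounted for, and the indexing convention $1\le b<a$ must match the form $2^a+2^b+1$.
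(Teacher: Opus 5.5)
Your proof is correct and follows the paper's route. Since $3^{m}$ is odd, peeling off the low bit reduces $\sd(3^{m})\le 2$ to ruling out $3^{m}=2^{a}+1$ for $m\ge 3$. You settle that with the mod-$4$ parity step and the factorization $(3^{k}-1)(3^{k}+1)=2^{a}$, which the paper leaves implicit. The case $\sd(3^{m})=3$ is, as in the paper, exactly the reading of Theorem~\ref{thm:DH} once the lowest bit is forced to be $2^{0}$.
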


The first assertion of Theorem~\ref{thm:sparse} is elementary (peeling the least
significant bit reduces it to ruling out $3^{m}=2^{a}+1$ for $m\ge 3$); the second
is the $\sd=3$ reading of Theorem~\ref{thm:DH}. The smallest powers of three
breaking the sparse bound are $3^{3}=11011_{2}$ ($\sd=4$) and
$3^{4}=1010001_{2}$ ($\sd=3$).

The combinatorial input to our second result is a finite form of the
Morse--Hedlund theorem \cite{MH38}: an infinite word of subword complexity
$\pcx_{w}(n)\le n$ for some $n$ is eventually periodic. We use a quantitative
finite version, stated and proved in Section~\ref{sec:complexity}
(Lemma~\ref{lem:fmh}), in the spirit of the special-factor analysis of Carpi and
de Luca \cite{CdL00}.

\subsection{Results of this paper}\label{sub:new}

Theorems~\ref{thm:stewart} and~\ref{thm:blocks} forbid long constant runs. We
strengthen this in two directions, neither of which had, at the time of writing,
been treated for the integer $3^{m}$; the existing quantitative results on digit
changes and on subword complexity in this circle concern algebraic irrationals,
whose expansions are infinite, rather than the finite word of a single power.
Section~\ref{sub:added} records a subsequent development, which supersedes the
qualitative content of Theorem~\ref{thm:complexity} while leaving its effective
form, and Theorem~\ref{thm:aperiodic}, untouched.

Our first result forbids long \emph{periodic} runs, of any fixed period. It is
proved in Section~\ref{sec:aperiodic}.

\begin{theoremintro}\label{thm:aperiodic}
For every fixed period $p\ge 1$ there is a constant $C_{p}>0$ such that
\[
\bcount_{p}(3^{m})\ \ge\ \frac{\log m}{\log\log m+C_{p}}-2
\qquad\text{for every } m\ge 2;
\]
in particular $\bcount_{p}(3^{m})\to\infty$. Equivalently, no window of the
binary expansion of $3^{m}$ deeper than a fixed power of $\log m$ can be
$p$-periodic.
\end{theoremintro}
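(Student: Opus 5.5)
Our plan is to reduce the count of period-$p$ breaks of $N=3^{m}$ to the count of digit changes (or nonzero digits) of an auxiliary integer that is still an $S$-unit-like quantity, and then invoke Baker--Wüstholz (Theorem~\ref{thm:bw}) as in Stewart's argument. The natural auxiliary integer is
\[
D:=(2^{p}-1)\,3^{m}\;=\;2^{p}3^{m}-3^{m}.
\]
The binary digits of $2^{p}N$ are those of $N$ shifted up by $p$. In the subtraction $2^pN-N$ the bitwise difference $\bit{i+p}$ against $\bit{i}$ vanishes except at the period-$p$ breaks, together with at most $p$ boundary positions at each end. This suggests a representation of $D$ as a signed-digit sum $\sum_{i}\epsilon_i 2^{i}$, $\epsilon_i\in\{-1,0,1\}$, whose number of nonzero terms is at most $\bcount_{p}(N)+2p$ (from the bottom $p$ digits and the top $p$ digits). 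After collecting boundary contributions, this gives $D$ as a sum of at most $\bcount_p(N)+c_p$ signed powers of two.

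The key steps, in order, are as follows.

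(i) Make this representation precise. Write $\epsilon_i=\bit{i-p}(N)-\bit{i}(N)$, so $D=\sum_i \epsilon_i 2^i$ exactly, with no carries needed since we allow signed digits. Then $\#\{i:\epsilon_i\ne0\}\le \bcount_p(N)+2p$.

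(ii) Run Stewart's argument for $k$ signed powers of two. Suppose $(2^{p}-1)3^{m}=\sum_{j=1}^{k}\epsilon_j 2^{a_j}$ with $a_1>\dots>a_k$. We proceed inductively on the gaps. Group the top terms as $2^{a_{r}}u_r$, where $|u_r|\le 2^{a_1-a_r+1}$ is an integer. Then
\[
\bigl|(2^p-1)3^{m}2^{-a_{r}}u_r^{-1}-1\bigr|\le 2^{a_{r+1}-a_r+1}/|u_r|.
\]
The left side is a nonzero linear form in $\log 3,\log 2,\log u_r,\log(2^p-1)$. Nonvanishing holds because $3^m(2^p-1)\ne 2^{a_r}u_r$ unless the tail is zero. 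Theorem~\ref{thm:bw} bounds this linear form below by $-C_p (a_1-a_r+1)\log m$, since the height of $u_r$ is $O(a_1-a_r)$. Hence each gap $a_r-a_{r+1}$ is at most $C_p(a_1-a_r+1)\log m$. This forces $a_1-a_{r}\le (C_p\log m)^{r}$. Since $a_1\asymp m$, we get $(C_p\log m)^{k}\gg m$, i.e.\ $k\ge \log m/(\log\log m+C_p)$ after adjusting constants.

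(iii) Combine: $\bcount_p(3^m)\ge k-2p$. The target inequality has ``$-2$'' rather than ``$-2p$''. We absorb the additive constant by enlarging $C_p$; this is allowed because $\log m/(\log\log m+C)$ decreases in $C$ and the difference can be made to exceed $2p$ only for large $m$. So we handle small $m$ by taking $C_p$ huge, which makes the right side negative there.

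The ``window'' reformulation follows because a $p$-periodic window of depth $L$ starting at the bottom corresponds to a gap of size $L$ in the signed representation. Step (ii) bounds each gap by $(C_p\log m)^{O(1)}$ from the low end, and the same computation applies to interior windows.

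The main obstacle is step (ii): handling signed digits and the possibility that the tail sum $u_r$ or the full form vanishes or degenerates. In particular, the extra factor $(2^{p}-1)$ must be carried as a fixed algebraic number whose height only enters $C_p$. I would first try to cite \cite[Cor.~1.5, Rem.~4.4]{BK17} directly for the integer $(2^{p}-1)3^{m}$, which is an $S$-unit times a fixed integer. If that result does not cover signed representations, I would reprove the Stewart-type induction above, checking nonvanishing at each stage via $2$-adic valuation: the bottom signed term fixes $v_2$ of the tail.
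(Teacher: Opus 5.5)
Your argument is correct. It uses the same four-logarithm Baker--W\"ustholz estimate as the paper, but organizes the combinatorics differently.

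\textbf{Comparison with the paper.} You encode every period-$p$ break at once in the exact identity $(2^p-1)3^m=\sum_i\bigl(\bit{i-p}(3^m)-\bit{i}(3^m)\bigr)2^i$. You then run a Stewart-style sequential-gap induction on its support. The paper instead proves a local estimate for a single $p$-periodic window (Lemma~\ref{lem:gapP}, built from Lemmas~\ref{lem:permod} and~\ref{lem:nondeg}). It converts that estimate into a count through the geometric windows $[\Mtop-\theta^{j+1},\Mtop-\theta^{j})$ and Proposition~\ref{prop:endgame}.

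The linear forms are literally the same. For the window of $3^m$ underlying a zero run of your signed digits, your tail is the paper's $E$ in \eqref{eq:key}, and $u_r2^{a_r}=A_1'2^{y'}$.

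Your route is more economical. It needs no periodic remainder identity, and nonvanishing of each form is just nonvanishing of a nonempty signed sum of distinct powers of $2$. The paper's route buys two things. Its window lemma is reused in Lemma~\ref{lem:core}, where the periodic factor comes from Lemma~\ref{lem:fmh} rather than from a break count. It also gives an explicit $C_p$ with no absorption step.

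\textbf{Points to tighten.}

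First, ``since $a_1\asymp m$'' is not what closes the induction. You need $a_1-a_k\gg m$. This holds because $(2^p-1)3^m$ is odd, so $\epsilon_0=-1$ and $a_k=0$; say so.

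Second, your window remark overstates what the induction gives. It bounds a gap by $O_p(\log m)$ times the depth $a_1-a_r+1$ of its upper end, exactly as in Lemma~\ref{lem:gapP}. It does not bound every gap by $(C_p\log m)^{O(1)}$.

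Third, do not take the Bugeaud--Kaneko shortcut through the ordinary blocks of $(2^p-1)3^m$. A signed expansion of weight $k$ only gives $\tcount\bigl((2^p-1)3^m\bigr)\le 2k$. That factor $2$ in the leading term cannot be absorbed into $C_p$, so keep the direct induction.

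Finally, the absorption in (iii) is valid but unnecessary. The nonzero signed digits at positions $\ge p$ number exactly $\bcount_p(3^m)+1$. Run the induction down to the first signed digit below position $p$, which exists since $\epsilon_0\neq 0$. This covers a depth of at least $\Mtop+1>m$, so $\bcount_p(3^m)\ge\log m/(\log\log m+C_p)-1$ directly. That is the offset Remark~\ref{rem:nondeg}(ii) anticipates but the paper does not claim.
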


The case $p=1$ recovers Theorem~\ref{thm:blocks}. The engine is a new estimate
for periodic windows (Lemma~\ref{lem:gapP}), a four-logarithm companion of the
run estimates behind the classical bounds. The constant is explicit: one may
take $C_{1}<39.31$, and $C_{p}<40+3\log(p+1)$ for every $p\ge 1$
(Remark~\ref{rem:constA}).

Our second result is a Morse--Hedlund floor for the finite binary word of a
single $3^{m}$. It is proved in Section~\ref{sec:complexity}.

\begin{theoremintro}\label{thm:complexity}
For every length $n\ge 1$, the low-order binary word of $3^{m}$ meets the
Morse--Hedlund floor,
\[
\pcx_{3^{m}}(n)\ \ge\ n+1,
\]
for all sufficiently large $m$.
\end{theoremintro}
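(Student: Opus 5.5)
The plan is to argue by contradiction: fix $n$ and suppose $\pcx_{3^{m}}(n)\le n$ for infinitely many $m$. A finite Morse--Hedlund lemma (Lemma~\ref{lem:fmh}) will turn this into the statement that the low-order word of $3^{m}$ is $q$-periodic, for some $q\le n$, outside a boundary region of length $O(n)$. That makes $\bcount_{q}(3^{m})$ bounded in terms of $n$ alone. On the other hand, Theorem~\ref{thm:aperiodic}, applied to each of the finitely many periods $q\le n$, gives $\bcount_{q}(3^{m})\to\infty$ uniformly over $q\le n$. These two facts are incompatible once $m$ is large.

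\emph{Step 1: an index where complexity stalls.} By Theorem~\ref{thm:sparse} (or Theorem~\ref{thm:blocks}), for large $m$ both letters occur in $w$, so $\pcx_{3^m}(1)=2$. If $\pcx_{3^m}(n)\le n$, then $\pcx$ cannot increase strictly at every step from $1$ to $n$. Hence there is some $k<n$ with $\pcx_{3^m}(k+1)\le \pcx_{3^m}(k)\le n$.

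\emph{Step 2: the Rauzy-graph argument at length $k$.} Consider the length-$k$ factors of $w$ as states, with an edge for each length-$(k+1)$ factor. Every length-$k$ factor has at least one right extension, except possibly the suffix of $w$ when that suffix occurs only once. Therefore
\[
\sum_{u}\bigl(\#\text{ext}(u)-1\bigr)=\pcx(k+1)-\pcx(k)+\varepsilon\le\varepsilon\le 1,
\]
where $\varepsilon\in\{0,1\}$ records whether the suffix is a dead end. Two cases follow.
\begin{itemize}
\item If no factor is right special, the next bit of $w$ is a function of the previous $k$ bits. The walk through the at most $n$ states then becomes periodic after at most $n$ steps, with period $q\le n$.
\item Otherwise there is exactly one right special factor $s$ and the suffix is the unique dead end. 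Here I would look at the first repeated state in the walk. Between its two visits the walk traverses a cycle $\gamma$ of length $q\le n$. Since $s$ is the only branching vertex, the walk can leave $\gamma$ only at $s$, and it can do so at most once before reaching a part of the graph that is again deterministic. Running the same argument on the reversed word with left special factors controls the other end.
\end{itemize}
In either case I expect a decomposition $w=xyz$ with $|x|,|z|=O(n)$ (say at most $2n$ each) and $y$ being $q$-periodic. I would isolate this as the conclusion of Lemma~\ref{lem:fmh}. The second case is the main obstacle: one has to rule out the walk oscillating between two cycles through $s$. I would handle it by noting that two distinct cycles through $s$ would force either $\pcx(k+1)>\pcx(k)$ or a second dead end.

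\emph{Step 3: conclusion.} From the decomposition, the only period-$q$ breaks lie within distance $q$ of $x$ or $z$, or among the top bits of $3^m$ that are excluded from $w$. So $\bcount_{q}(3^{m})\le 4n+2q+O(1)\le 6n+O(1)$. Now set $C=\max_{q\le n}C_q$. Theorem~\ref{thm:aperiodic} gives $\bcount_q(3^m)\ge \log m/(\log\log m+C)-2$ for all $q\le n$, and this exceeds $6n+O(1)$ for every $m$ beyond an explicit threshold. Hence $\pcx_{3^m}(n)\ge n+1$ for all $m$ past that threshold, which proves the theorem, with an effective bound $m\ge m_0(n)$.
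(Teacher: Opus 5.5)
Your argument is correct, but it closes the contradiction differently from the paper.

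\emph{The shared start.} Both routes begin with Lemma~\ref{lem:fmh} at $L=\Mtop$. If $\pcx_{3^m}(n)\le n$, then $3^m$ has a period $p\le n$ on $[a,a+\Mtop-2n)$ with $a\le 2n$; this is the form used in Lemma~\ref{lem:core}. So your Rauzy-graph reconstruction in Steps~1--2 is not needed.

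\emph{The paper's route.} It applies Lemma~\ref{lem:gapP} once, to the single window $x=a+1$, $y=a+\Mtop-2n$. The base of this window lies at depth about $\Mtop$ and its top lies within $2n$ of the top bit. The estimate therefore pits $\Mtop-(3n+2)\ge m-(3n+2)$ against $O(\bwC(4,1)n^2\log m)$ (Lemma~\ref{lem:core}), giving a threshold $m_0(n)$ of order $n^2\log n$.

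\emph{Your route.} You count breaks instead. They can only lie in $[0,a)$ or in the last $2n+p-a$ positions, so $\bcount_p(3^m)\le 2n+p\le 3n$. You then invoke Theorem~\ref{thm:aperiodic} uniformly over the finitely many $p\le n$. There is no circularity, since Section~\ref{sec:aperiodic} never uses Theorem~\ref{thm:complexity}.

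\emph{What each buys.} Your route makes Theorem~\ref{thm:complexity} a formal corollary of Theorem~\ref{thm:aperiodic} and Lemma~\ref{lem:fmh}, with no second linear-forms computation and no window placement to check. The price is the threshold. You need $\log m/(\log\log m+C)-2>3n$ with $C<40+3\log(n+1)$. That is still effective, but $\log m$ must grow like $n(\log\log m+C)$, so $m$ is exponential in $n$. Already at $n=1$ you need $m\approx e^{224}\approx 10^{97}$, against $1.33\times10^{18}$ in Remark~\ref{rem:constB}. The loss is structural. The window scheme of Theorem~\ref{thm:aperiodic} certifies only one break per geometric window, hence only about $\log m/\log\log m$ breaks. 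The direct application instead exploits that the periodic stretch spans essentially the whole expansion.

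If you keep your own combinatorics, two small points.

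\emph{The bound in Step~1.} With the paper's position sets, $\pcx$ can drop by one from $k$ to $k+1$. For example, $w=00110$ with $n=3$ has $\pcx(2)=4$ and $\pcx(3)=3$. So Step~1 only gives $\pcx(k)\le n+1$. This is harmless: it just allows periods $q\le n+1$.

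\emph{The feared oscillation cannot occur.} Every edge of the Rauzy graph is traversed by the walk, and the walk ends at the dead-end suffix. So exactly one out-edge of the branching vertex $s$ returns to $s$, and the other reaches the suffix without returning. If both returned, the suffix would be unreachable; if neither did, $s$ would be visited once and only one of its edges used. Hence the walk is a tail, then a repeated cycle, then a single exit path — exactly your decomposition.
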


The value $n+1$ is the aperiodicity threshold, not the maximum available to a
binary word; Remark~\ref{rem:calibration} states precisely what the theorem does
and does not assert.

The threshold in $m$ is effective: $\pcx_{3^{m}}(n)\ge n+1$ once $m$ exceeds the point
where the linear growth of $\Mtop(3^{m})\ge m$ overtakes a bound of order
$n^{2}\log m$ coming from Theorem~\ref{thm:aperiodic}'s engine. Numerically it
suffices to take $m\ge 2\times 10^{18}n^{2}(1+\log n)$, and
$m\ge 1.33\times 10^{18}$ when $n=1$ (Remark~\ref{rem:constB}).

Neither theorem hides an ineffective constant. The single quantitative input is
the Baker--W\"ustholz constant $\bwC(4,1)$, which \cite{BW93} specifies by a
closed formula; Remark~\ref{rem:explicit} evaluates it and traces it through to
the constants above. We do not claim these numbers are of the right order of
magnitude---they are not---only that the results are effective rather than
qualitative.

Both theorems have been formally verified in the Lean~4 proof assistant;
Appendix~\ref{app:lean} records the Lean name and status of every statement
appearing in this paper.

\subsection{Note added}\label{sub:added}

After the present paper was posted, Bugeaud \cite{Bug26} took up the same
question by a different route and reached, ineffectively, a conclusion beyond
the reach of the method used here. The comparison divides cleanly, and we record
it.

Reading the base-$b$ representation of an integer as a finite word, as we do,
\cite[Thm.~1.2]{Bug26} proves the following. Let $b\ge 2$, let $C\ge 2$, and let
$S$ be a finite set of primes. If $x$ has all its prime factors in $S$, if its
base-$b$ word $a_{0}\dots a_{n}$ satisfies $b^{\lfloor n/(C+2)\rfloor}\nmid x$,
and if $x$ is large enough in terms of $b$, $S$ and $C$, then that word has more
than $C\ell$ distinct factors of length $\ell:=\lceil n/(C+2)\rceil$; the proof
in fact yields that its $n+2-\ell$ factors of that length are pairwise distinct.
Taking $b=2$ and $S=\{3\}$ the divisibility hypothesis is vacuous, so the binary
word of $3^{m}$ has factor complexity above any prescribed linear function once
$m$ is large. In particular it is not Sturmian \cite[Thm.~1.1]{Bug26}, and it is
not generated by a $k$-uniform morphism on an alphabet of $D$ letters with $k+D$
bounded \cite[Cor.~1.3]{Bug26}. (The word of \cite{Bug26} is the reverse of ours
with the leading bit restored; factor complexity is invariant under reversal and
moves by at most one under deletion of a letter, so the two readings agree here.)

\emph{Qualitatively, \cite{Bug26} is stronger, and subsumes
Theorem~\ref{thm:complexity}.} Sturmian words are exactly those meeting the floor
$\pcx(n)=n+1$ at every $n$, so Theorem~\ref{thm:complexity} does not exclude
them, whereas \cite[Thm.~1.1]{Bug26} does; this is precisely the strengthening
that Remark~\ref{rem:calibration} identifies as unreachable from
Lemma~\ref{lem:gapP}. Moreover \cite[Thm.~1.2]{Bug26} implies
Theorem~\ref{thm:complexity} qualitatively, by way of our
Lemma~\ref{lem:fmh}: if $\pcx_{3^{m}}(n)\le n$ for a fixed $n$ and
$\Mtop\ge 3n$, that lemma supplies a period $p\le n$ valid on a factor of length
$\Mtop-2n$ beginning at some $a\le 2n$, so that at any length $\ell\le\Mtop-2n$
all but at most $4n$ of the starting positions carry a factor determined by its
residue modulo $p$, giving $\pcx_{3^{m}}(\ell)\le 5n$; taking $C=2$ and
$\ell=\lceil\Mtop/4\rceil$ contradicts $\pcx_{3^{m}}(\ell)\ge\tfrac34\Mtop$ for
$\Mtop$ large.

\emph{Quantitatively, nothing above is superseded.} Theorems 1.1--1.4 of
\cite{Bug26} rest on the $p$-adic Subspace theorem of Schmidt and Schlickewei,
which is ineffective; \cite{Bug26} says so explicitly, and no threshold in $m$
can be extracted from those proofs. Theorem~\ref{thm:complexity} carries the
explicit $m_{0}(n)$ of Remark~\ref{rem:constB}, and Theorem~\ref{thm:aperiodic}
the explicit rate $\log m/(\log\log m+C_{p})$ with $C_{p}<40+3\log(p+1)$ for the
number of period-$p$ breaks. For the latter \cite{Bug26} yields no rate at all:
its Theorem~1.2 gives $\bcount_{p}(3^{m})\ge C$ only for $m$ beyond an
uncomputable bound depending on $C$.

\emph{One effective statement of \cite{Bug26} does go past
Theorem~\ref{thm:aperiodic}}, on the narrower question of exact powers. Its
Theorem~1.6, proved with Matveev's estimate \cite{Mat00}, shows that if the
base-$b$ word of an integer with all prime factors in $S$ is a power $W^{t}$,
then $t\le C(S,b)$ with $C(S,b)$ effectively computable; at $b=2$, $S=\{3\}$
this bounds the exponent of a periodic binary word of $3^{m}$ by an absolute
constant. Theorem~\ref{thm:aperiodic} gives less here. If $a_{0}\dots a_{\Mtop}$
is a power $W^{t}$ with $\ell:=|W|$, then Lemma~\ref{lem:gapP} applied to the
window $x=1$, $y=\Mtop$ reads
\[
(\Mtop-1)\log 2\ \le\ 2\,\bwC(4,1)\,\ell^{2}\log 3\,
\max\bigl(\log(2m+\ell),1\bigr)+(\ell+1)\log 2 ,
\]
so that $\ell$ is bounded below only by a quantity of order
$\sqrt{m/\log m}$ and $t=(\Mtop+1)/\ell$ above only by one of order
$\sqrt{m\log m}$---growing with $m$, where \cite{Bug26} has a constant. What
Theorem~\ref{thm:aperiodic} supplies and \cite{Bug26} does not is the
window statement: a period-$p$ stretch anywhere in the expansion, not merely a
prefix, is bounded in depth, with a count of breaks growing in $m$.

\emph{The mechanism behind the difference} is worth isolating, since
Remark~\ref{rem:calibration} locates the $n+1$ ceiling in the combinatorial
half. Lemma~\ref{lem:fmh} needs the hypothesis $\pcx(n)\le n$ to produce a
period, and a period is what Lemma~\ref{lem:gapP} consumes. The argument of
\cite{Bug26} never passes through periodicity: from a low-complexity word it
extracts only a \emph{repetition}---two occurrences of one factor, at an
arbitrary offset---and converts that into a three-term relation
\cite[(2.2)]{Bug26} between $b^{s}x$, $x$ and a third integer, an analogue of
our \eqref{eq:key}, which is then fed to the Subspace theorem rather than to
Baker's theory. A repetition is available under the far weaker hypothesis
$\pcx(\ell)\le C\ell$, so the ceiling disappears; the price is the loss of
effectivity, and with it of everything Sections~\ref{sec:aperiodic}
and~\ref{sec:complexity} compute.

%======================================================================
\section{Linear forms in logarithms and the periodic-run estimate}\label{sec:tools}
%======================================================================

The engine common to the classical bounds and to Theorems~\ref{thm:aperiodic}
and~\ref{thm:complexity} is Stewart's: a long run, or more generally a long
periodic stretch, in the binary expansion of $3^{m}$ produces a small nonzero
linear form in the logarithms of $2$, $3$ and an auxiliary integer, which the
Baker--Wüstholz theorem forbids from being too small.

\begin{theorem}[Baker--Wüstholz \cite{BW93}]\label{thm:bw}
Let $\alpha_{1},\dots,\alpha_{n}$ be nonzero elements of a number field $K$ of
degree $d$, and let $b_{1},\dots,b_{n}\in\Z$ satisfy $|b_{i}|\le B$ with $B\ge 2$.
If $\Lambda:=\sum_{i}b_{i}\log\varphi(\alpha_{i})\neq 0$ $($principal branch$)$,
then
\[
\log|\Lambda|\ \ge\ -\,\bwC(n,d)\cdot\max\!\Bigl(\log B,\tfrac1d\Bigr)
\cdot\prod_{i}\mh(\alpha_{i}).
\]
\end{theorem}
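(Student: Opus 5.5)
This statement is not proved from scratch here. It is the main theorem of Baker and W\"ustholz \cite{BW93}, and the plan is to check that the form quoted in Theorem~\ref{thm:bw} is a faithful specialization of their statement. The work is bookkeeping of normalizations; there is no new argument.

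First, recall the precise form in \cite{BW93}. For nonzero algebraic $\alpha_{1},\dots,\alpha_{n}$ in a number field $K$ of degree $d$, a fixed embedding $\varphi$, and fixed determinations of the logarithms, consider the linear form $L(z)=b_{1}z_{1}+\dots+b_{n}z_{n}$ with $b_{i}\in\Z$. If $\Lambda=L(\log\varphi(\alpha_{1}),\dots,\log\varphi(\alpha_{n}))\neq 0$, they prove
\[
\log|\Lambda|\ >\ -\,\bwC(n,d)\,\mh(\alpha_{1})\cdots\mh(\alpha_{n})\,\mh(L).
\]
Here the modified heights are
\[
\mh(\alpha)=\max\bigl(h(\alpha),\,|\log\varphi(\alpha)|/d,\,1/d\bigr),
\qquad
\mh(L)=\max\bigl(h(L),\,1/d\bigr),
\]
where $h$ is the absolute logarithmic Weil height and $h(L)$ is the height of the coefficient vector. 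The constant is
\[
\bwC(n,d)=18\,(n+1)!\,n^{n+1}(32d)^{n+2}\log(2nd).
\]
We take these normalizations as the definitions of $\mh$ and $\bwC$ in the quoted statement.

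Second, bound $\mh(L)$. For integer coefficients, the projective height of $(b_{1},\dots,b_{n})$ is at most $\log\max_{i}|b_{i}|$, since all non-archimedean absolute values are at most $1$. Hence $|b_{i}|\le B$ gives $h(L)\le\log B$, and so $\mh(L)\le\max(\log B,1/d)$. Because the right-hand side of the Baker--W\"ustholz inequality is decreasing in $\mh(L)$, substituting this upper bound yields exactly the displayed inequality of Theorem~\ref{thm:bw}, with $\ge$ in place of $>$.

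Third, check the hypotheses. The theorem in \cite{BW93} requires only $\Lambda\neq 0$; it does not require linear independence of the logarithms. The principal branch is one admissible choice of logarithm, and the dependence of $\mh(\alpha)$ on that branch is built into the definition above.

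The main obstacle, such as it is, lies in this convention matching. Later uses must apply the same $\mh$, in particular the term $|\log\alpha|/d$, which dominates for $\alpha=2,3$ at $d=1$ and gives $\mh(2)=\log 2$ and $\mh(3)=\log 3$. They must also respect the $1/d$ floor on $\mh(L)$, which the hypothesis $B\ge 2$ makes harmless. I would state explicitly that, in the applications, $K=\Q$, $d=1$, $n\le 4$, and $\varphi$ is the real embedding. The constant then needed is $\bwC(4,1)$, evaluated from the closed formula in Remark~\ref{rem:explicit}.
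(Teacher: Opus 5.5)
Your reduction is correct and matches how the paper treats this result: Theorem~\ref{thm:bw} is cited from \cite{BW93}, not reproved, with $\mh(L)\le\max(\log B,1/d)$ putting their coefficient height into the stated form and $\bwC(n,d)$ given by the closed formula of Remark~\ref{rem:explicit}. One slip in your last paragraph: at $d=1$ the floor $1/d=1$ exceeds $\log 2$, so $\mh(2)=1$, not $\log 2$ (only for $3$ does the logarithm dominate, giving $\mh(3)=\log 3$); the paper records this, and Step~8 of Lemma~\ref{lem:gapP} must use $\mh(2)=1$, because using $\log 2$ there would claim a stronger lower bound than \cite{BW93} provides.
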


We use Theorem~\ref{thm:bw} only over $K=\Q$ ($d=1$), where each $\alpha_{i}$ is
a positive rational and the modified height of a positive integer $a$ is
$\mh(a)=\max(\log a,1)$; thus $\mh(3)=\log 3$, while $\mh(2)=1$ because the
floor $1/d=1$ is active for $a=2$ (note $\log 2<1$). Set
$\Mtop:=\Mtop(3^{m})=\lfloor\log_{2}3^{m}\rfloor$.

\begin{remark}[All constants are explicit]\label{rem:explicit}
Theorem~\ref{thm:bw} is effective, and the constant $\bwC(n,d)$ occurring in it
is not an unspecified quantity: \cite{BW93} gives
\[
\bwC(n,d)\ =\ 18\,(n+1)!\;n^{\,n+1}\,(32d)^{\,n+2}\,\log(2nd).
\]
Every application in this paper is at $(n,d)=(4,1)$, so the only constant
entering the argument is the single number
\[
\bwC(4,1)\ =\ 18\cdot 5!\cdot 4^{5}\cdot 32^{6}\cdot\log 8
\ =\ 6480\cdot 2^{40}\log 2\ <\ 4.94\times 10^{15}.
\]
Everything downstream is an elementary expression in this number: the
coefficient $\kappa_{p}$ and the constant $C_{p}$ of Theorem~\ref{thm:aperiodic}
(Remark~\ref{rem:constA}) and the threshold $m_{0}(n)$ of
Theorem~\ref{thm:complexity} (Remark~\ref{rem:constB}). We record the resulting
numerical values as they arise; no estimate below is merely qualitative, and no
step of either proof appeals to a compactness, limit or contradiction argument
that would destroy effectivity. The formalization uses the same definition:
\texttt{BakerWustholz.C n d} is the displayed formula, and the Lean proofs of
both theorems instantiate it at $(4,1)$.

The constants are, of course, far from optimal: $\bwC(4,1)$ is a general-purpose
transcendence constant, and no attempt is made here to sharpen it. What matters
for the statements is that they are computable, so that the qualifiers ``for a
constant $C_{p}$'' and ``for all sufficiently large $m$'' can be replaced by
numbers.
\end{remark}

The estimates behind Theorems~\ref{thm:stewart} and~\ref{thm:blocks} bound how
far below the top of the expansion a run of \emph{constant} bits can begin: a run
of zeros (resp.\ ones) in positions $[x,y)$ forces $(\Mtop-x)\log 2\le\bwC(3,1)
\log 3\,(\Mtop+1-y)\max(\log 2m,1)$, via a three-term linear form.\footnote{We use
the descriptive names \emph{run estimate} for these lemmas and
\emph{period-$p$ run estimate} for Lemma~\ref{lem:gapP}; the formalization calls
them ``gap principles''. Each is an instance of Theorem~\ref{thm:bw}.} The result
of this paper needs the analogous estimate for \emph{periodic} runs, which is new
and uses a four-term form.

The combinatorial half of that estimate is the following remainder identity,
which converts a periodic window into an exact equation between integers. It is
where the auxiliary integer $2^{p}-1$---the fourth logarithm---originates.

\begin{lemma}[Periodic remainder identity]\label{lem:permod}
Let $N,x,y,p\in\N$ with $p\ge 1$, and suppose $\bit{i}(N)=\bit{i+p}(N)$ for all
$i$ with $x\le i$ and $i+p<y$. Put $c:=\lfloor N/2^{x}\rfloor\bmod 2^{p}$. Then
for every $t\ge 0$ with $x+tp\le y$,
\[
(2^{p}-1)\bigl(N\bmod 2^{x+tp}\bigr)
\ =\ (2^{p}-1)\bigl(N\bmod 2^{x}\bigr)+c\,\bigl(2^{x+tp}-2^{x}\bigr).
\]
\end{lemma}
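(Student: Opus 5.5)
Induction on $t$ is the natural route, based on the decomposition
\[
N\bmod 2^{x+(t+1)p}\;=\;\bigl(N\bmod 2^{x+tp}\bigr)+2^{x+tp}\,\bigl(\lfloor N/2^{x+tp}\rfloor\bmod 2^{p}\bigr).
\]
The case $t=0$ is trivial, since both sides equal $(2^{p}-1)(N\bmod 2^{x})$. For the inductive step, assume the identity at $t$ and that $x+(t+1)p\le y$. Multiply the decomposition by $2^{p}-1$ and substitute the induction hypothesis. It then suffices to prove the \emph{block claim}
\[
\lfloor N/2^{x+tp}\rfloor\bmod 2^{p}\;=\;c \qquad\text{whenever } x+(t+1)p\le y .
\]
Granting this, the inductive step reduces to the telescoping identity
\[
c\bigl(2^{x+tp}-2^{x}\bigr)+(2^{p}-1)\,c\,2^{x+tp}\;=\;c\bigl(2^{x+(t+1)p}-2^{x}\bigr),
\]
which is immediate.

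The block claim is proved by an inner induction on $t$. Write $\lfloor N/2^{s}\rfloor\bmod 2^{p}=\sum_{j<p}\bit{s+j}(N)2^{j}$, so two such residues agree exactly when the bits agree position by position. To pass from $t$ to $t+1$ we need
\[
\bit{x+tp+j}(N)=\bit{x+(t+1)p+j}(N)\qquad\text{for } 0\le j<p .
\]
This is the hypothesis applied at $i=x+tp+j$. We have $i\ge x$, and the bound $x+(t+2)p\le y$ (required for the claim at $t+1$) gives $i+p\le x+(t+2)p-1<y$. So every window we use satisfies the hypothesis. In particular, the condition $i+p<y$ (strict) is exactly compatible with requiring only $x+(t+1)p\le y$ in the block claim.

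The main obstacle is this index bookkeeping. We must check that each use of $\bit{i}=\bit{i+p}$ has $i+p<y$. We must also make sure the block claim is invoked only for blocks wholly below $y$. The inductive step from $t$ to $t+1$ uses the block starting at $x+tp$, which ends at $x+(t+1)p\le y$, so this is fine. Everything else is elementary arithmetic of $\bmod$ and $\lfloor\cdot\rfloor$ by powers of two. A natural implementation is to first state the one-line lemma $N\bmod 2^{a+b}=N\bmod 2^{a}+2^{a}(\lfloor N/2^{a}\rfloor\bmod 2^{b})$, and then run the two inductions.
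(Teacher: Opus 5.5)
Your proposal is correct and follows the paper's proof essentially step for step: the block claim (the paper's \eqref{eq:block}) is proved by induction on the block index, the one-block peeling identity \eqref{eq:peel} is used, and an outer induction on $t$ is closed by the same telescoping. Your index check is the right one: the step from $t$ to $t+1$ in the block claim uses $x+(t+2)p\le y$ to get $i+p<y$. You state this slightly more carefully than the paper does.
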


\begin{proof}
We first record that the $p$-bit block read off at each level of the window is
the same, namely $c$: for every $j$ with $x+jp+p\le y$,
\begin{equation}\label{eq:block}
\lfloor N/2^{x+jp}\rfloor\bmod 2^{p}\ =\ c .
\end{equation}
Indeed, the block at level $x+jp$ consists of the bits
    \[
\bit{x+jp}(N),\dots,\bit{x+jp+p-1}(N),
\]
    and each of these equals its counterpart
$p$ places higher---all the indices involved satisfy the hypothesis, because
$x+jp+p\le y$. So the block at level $x+jp$ equals the block at level $x+jp+p$,
and induction on $j$ from the base case $j=0$, where the block is $c$ by
definition, gives \eqref{eq:block}.

Next, peeling one $p$-bit block off a remainder is the identity
    \[
    N\bmod 2^{a+p}=\bigl(N\bmod 2^{a}\bigr)+\bigl(\lfloor N/2^{a}\rfloor\bmod
2^{p}\bigr)2^{a},
\]
    valid for every $a$. Taking $a=x+tp$ and inserting
\eqref{eq:block},
\begin{equation}\label{eq:peel}
N\bmod 2^{x+(t+1)p}\ =\ \bigl(N\bmod 2^{x+tp}\bigr)+c\,2^{x+tp} .
\end{equation}
Now induct on $t$. For $t=0$ both sides of the assertion equal
$(2^{p}-1)(N\bmod 2^{x})$. Assuming it for $t$, multiply \eqref{eq:peel} by
$2^{p}-1$; the new term is 
    \[
    (2^{p}-1)c\,2^{x+tp}=c\bigl(2^{x+(t+1)p}-2^{x+tp}\bigr),
\]
and it combines with the inductive hypothesis by the telescoping
    \[
        \bigl(2^{x+tp}-2^{x}\bigr)+\bigl(2^{x+(t+1)p}-2^{x+tp}\bigr)=2^{x+(t+1)p}-2^{x}.\]
\end{proof}

\begin{remark}
Unrolled, Lemma~\ref{lem:permod} says
    \[
    N\bmod 2^{x+tp}=\bigl(N\bmod 2^{x}\bigr)+c\,2^{x}\,(2^{tp}-1)/(2^{p}-1):
\]
    the
window contributes $t$ copies of the pattern $c$, a geometric series. The point
of stating it in the cleared form is that multiplying by $2^{p}-1$ removes the
denominator, so every quantity in sight stays a rational integer; the price is
the extra factor $2^{p}-1$, which is exactly the fourth logarithm in
Lemma~\ref{lem:gapP} below and the source of its $\bwC(4,1)$.
\end{remark}

The arithmetic half is the following nondegeneracy statement, which is what
keeps the linear form of Lemma~\ref{lem:gapP} away from zero. We state it in
base $b$, both because nothing is gained by specializing and because the general
form is what Section~\ref{sec:scope} needs.

\begin{lemma}[Nondegeneracy of the remainder]\label{lem:nondeg}
Let $b\ge 2$, $p\ge 1$ and $x\ge 0$ be integers, let $N\ge 1$, and put
\[
\begin{gathered}
G:=b^{p}-1,\qquad A_{2}:=N\bmod b^{x},\qquad
c:=\lfloor N/b^{x}\rfloor\bmod b^{p},\\[2pt]
E:=G\,A_{2}-c\,b^{x}.
\end{gathered}
\]
If $\ell$ is a prime with $\ell\mid b$ and $\ell\nmid N$, then
\[
E\ \equiv\ -N\ \pmod{\ell};
\]
in particular $E\neq 0$.
\end{lemma}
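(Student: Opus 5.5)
Since $\ell$ divides $b$, every positive power of $b$ is divisible by $\ell$, and the plan is simply to reduce each quantity defining $E$ modulo $\ell$. Two cases are needed according to whether $x\ge 1$ or $x=0$, because $b^{x}\equiv 0\pmod{\ell}$ only in the first.

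\emph{Case $x\ge 1$.} Here $b^{x}\equiv 0$, so $c\,b^{x}\equiv 0\pmod{\ell}$. Also $G=b^{p}-1\equiv -1\pmod{\ell}$ because $p\ge 1$. This gives $E\equiv -A_{2}\pmod{\ell}$. Since $A_{2}=N\bmod b^{x}$ differs from $N$ by a multiple of $b^{x}$, and $\ell\mid b^{x}$, we have $A_{2}\equiv N\pmod{\ell}$. Hence $E\equiv -N\pmod{\ell}$.

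\emph{Case $x=0$.} Now $A_{2}=N\bmod 1=0$ and $b^{x}=1$, so $E=-c$ with $c=N\bmod b^{p}$. Since $\ell\mid b^{p}$, we get $c\equiv N\pmod{\ell}$, and again $E\equiv -N\pmod{\ell}$.

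In both cases $\ell\nmid N$ gives $E\not\equiv 0\pmod{\ell}$, and therefore $E\neq 0$. The only step needing care is the $x=0$ edge case, where $b^{x}$ is not divisible by $\ell$. It is handled by the direct computation above, and there is no genuine obstacle.
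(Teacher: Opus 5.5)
Your proof is correct and is essentially the paper's own argument. It uses the same split into $x\ge 1$, where $c\,b^{x}\equiv 0$, $G\equiv -1$ and $A_{2}\equiv N \pmod{\ell}$, and $x=0$, where $E=-c=-(N\bmod b^{p})\equiv -N$ because $\ell\mid b^{p}$ for $p\ge 1$, and then concludes $E\neq 0$ from $\ell\nmid N$.
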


\begin{proof}
The two cases give the same congruence. If $x\ge 1$ then $\ell\mid b^{x}$, so
$c\,b^{x}\equiv 0$ and $A_{2}=N\bmod b^{x}\equiv N$, while $G=b^{p}-1\equiv-1$;
hence $E\equiv-N$. If $x=0$ then $A_{2}=N\bmod 1=0$ and $b^{x}=1$, so
$E=-c=-(N\bmod b^{p})$, and $\ell\mid b^{p}$ gives $N\bmod b^{p}\equiv N$; hence
again $E\equiv-N$. As $\ell\nmid N$, we get $E\not\equiv 0\pmod{\ell}$, so
$E\neq 0$.
\end{proof}

\begin{remark}[Degeneracy, and the role of $x\ge 1$]\label{rem:nondeg}
Two comments on the hypotheses, the second of which we have not pursued.

\emph{(i) What the hypothesis excludes is the trailing-digit degeneracy.} If
every prime of $b$ divides $N$, then $E$ genuinely can vanish. For $b=2$ and
$N=6^{m}=2^{m}3^{m}$, any $x$ and $p$ with $x+p\le m$ give $A_{2}=0$ and $c=0$,
hence $E=0$: at $m=10$, $x=3$, $p=4$ one has $E=0$ on the nose, and the identity
\eqref{eq:key} below degenerates to $0=0$. Nothing is lost, because the deep
windows of such an $N$ really are periodic---they are constant---and no linear
form could say otherwise. This is the mechanism behind the $(a,b)=(6,2)$
discussion of Section~\ref{sec:scope}.

\emph{(ii) The condition $x\ge 1$ of Lemma~\ref{lem:gapP} is not needed for
nondegeneracy.} Lemma~\ref{lem:nondeg} covers $x=0$, where the two contributions
to $E$ simply exchange roles: $A_{2}$ vanishes and all of $E$ is carried by
$-c=-(N\bmod b^{p})$. Since $x\ge 1$ enters the proof of Lemma~\ref{lem:gapP}
nowhere else---see Remark~\ref{rem:hyp}---the lemma should hold with $1\le x$
weakened to $0\le x$, and Theorem~\ref{thm:aperiodic} would then carry the
offset $-1$ in place of $-2$, no window being sacrificed, matching
Theorems~\ref{thm:stewart} and~\ref{thm:blocks} exactly. We have not made that
change. The formalized \texttt{gap\_principle} retains the hypothesis
$1\le x$---it is used there at exactly two points, both inside the parity
computation---and the verified form of Theorem~\ref{thm:aperiodic} is the one
stated above, with $-2$. The improvement is recorded here as an observation, not
claimed as a theorem of this paper.
\end{remark}

\begin{lemma}[Period-$p$ run estimate]\label{lem:gapP}
Let $m\ge 1$, $p\ge 1$, $1\le x<y\le\Mtop$ with $2p\le y-x$, and suppose
$\bit{i}(3^{m})=\bit{i+p}(3^{m})$ for all $i$ with $x\le i$ and $i+p<y$. Then
\[
(\Mtop-x)\log 2\ \le\ \bwC(4,1)\,p\,\log 3\,(\Mtop+2p-y)\,
\max\bigl(\log(2m+p),1\bigr)+(p+1)\log 2 .
\]
\end{lemma}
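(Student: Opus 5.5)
We write $N:=3^{m}$ and $G:=2^{p}-1$, turn the periodic window into an exact integer relation by Lemma~\ref{lem:permod}, and feed the resulting four-term linear form to Theorem~\ref{thm:bw}.

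\emph{The integer relation.} Choose $t$ maximal with $x+tp\le y$. Then $y-p<x+tp\le y$, and $t\ge 2$ because $2p\le y-x$. Split
\[
N=\bigl(N\bmod 2^{x+tp}\bigr)+2^{x+tp}Q,\qquad Q:=\lfloor N/2^{x+tp}\rfloor .
\]
Multiply by $G$ and substitute Lemma~\ref{lem:permod} with $A_{2}=N\bmod 2^{x}$ and $c$ as there. This gives the key identity
\begin{equation}\label{eq:key}
G\cdot 3^{m}-2^{x+tp}D\ =\ E,\qquad D:=c+GQ,\quad E:=G A_{2}-c\,2^{x}.
\end{equation}
Lemma~\ref{lem:nondeg} with $b=2$ and $\ell=2$ (note $2\nmid 3^{m}$) gives $E\neq 0$. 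Trivially $|E|<2^{x+p}$, since both $GA_{2}$ and $c2^{x}$ lie in $[0,2^{p+x})$. For the size of $D$ we use $Q<2^{\Mtop+1-x-tp}$ and $c<2^{p}$. Together with $x+tp>y-p$ this yields
\[
0<D\le 2^{p}\cdot 2^{\Mtop+1-x-tp}\le 2^{\Mtop+2p-y}.
\]
Positivity of $D$ holds because $Q\ge 1$, as $x+tp\le y\le\Mtop$.

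\emph{The linear form.} Put
\[
\Lambda:=m\log 3+\log G-(x+tp)\log 2-\log D=\log\bigl(1+E/(2^{x+tp}D)\bigr).
\]
Then $\Lambda\neq 0$ because $E\neq 0$. We have $2^{x+tp}D=G3^{m}-E$, and $G3^{m}\ge 2^{p-1}2^{\Mtop}$. So $u:=E/(2^{x+tp}D)$ satisfies $|u|\le 2^{x+p}/(2^{p-1}2^{\Mtop}-2^{x+p})$.

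We split into two cases according to $\Mtop-x$. If $\Mtop-x\le p+1$, the asserted inequality holds trivially, since its right side already contains $(p+1)\log 2$. Otherwise $|u|\le 1/2$, and $|\log(1+u)|\le 2|u|$ gives
\[
\log|\Lambda|\ \le\ -(\Mtop-x)\log 2+(p+1)\log 2 .
\]
Absorbing the constants exactly into $(p+1)\log 2$ is the bookkeeping step I expect to be most delicate. I would first try to sharpen the bound to $|E|\le 2^{x+p}$ and use $G3^{m}\ge 2^{\Mtop+p-1}$ directly. If that falls short, I would loosen the case split to $\Mtop-x\le p+2$.

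\emph{Baker--Wüstholz.} Apply Theorem~\ref{thm:bw} with $n=4$ and $d=1$, taking $\alpha=(3,G,2,D)$ and coefficients $(m,1,-(x+tp),-1)$. All coefficients are bounded by $B:=2m+p$, since $x+tp\le\Mtop\le 2m$; hence $\max(\log B,1)=\max(\log(2m+p),1)$. The heights are as follows.
\begin{itemize}
\item $\mh(3)=\log 3$ and $\mh(2)=1$.
\item $\mh(G)=\max(\log(2^{p}-1),1)\le p$.
\item $\mh(D)=\max(\log D,1)\le\Mtop+2p-y$, using $\log D\le(\Mtop+2p-y)\log 2$ and $\Mtop+2p-y\ge 2p\ge 1$.
\end{itemize}
Hence
\[
-\log|\Lambda|\le\bwC(4,1)\,p\log 3\,(\Mtop+2p-y)\max(\log(2m+p),1).
\]
Combining this with the upper bound for $\log|\Lambda|$ gives the stated inequality.
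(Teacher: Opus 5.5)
Your argument is the paper's own: the same trimming to $t\ge 2$ whole periods, the same identity $G\,3^{m}=2^{x+tp}D+E$ (your $D$ is the paper's $A_{1}'$), $E\neq 0$ from Lemma~\ref{lem:nondeg}, the same four logarithms with $B=2m+p$, and the same height bounds. The one step that fails as written is the upper bound for $|\Lambda|$ when $p=1$. You bound the denominator from below by $G3^{m}-|E|$. With $k:=\Mtop-x$ this gives only $|u|\le 1/(2^{k-1}-1)$, hence $|\Lambda|\le 2^{2-k}\cdot 2^{k-1}/(2^{k-1}-1)$. For $p\ge 2$ the surplus factor is absorbed by $2^{p+1-k}$. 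For $p=1$, however, you need exactly $|\Lambda|\le 2^{2-k}$, and the surplus factor exceeds $1$ for every $k$. So your second fallback, enlarging the trivial case to $\Mtop-x\le p+2$, cannot repair it.

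Your first fallback does repair it, and it is what the paper does. Bound the denominator directly: $2^{x+tp}D\ge 2^{x+tp}\,GQ\ge G\,2^{\Mtop}\ge 2^{\Mtop+p-1}$, because $Q\ge 2^{\Mtop-x-tp}$. Equivalently, write $\Lambda=-\log\bigl(1-E/(G3^{m})\bigr)$ and use $G3^{m}\ge 2^{\Mtop+p-1}$. Either way the relevant ratio is at most $2^{x+p}/2^{\Mtop+p-1}=2^{x+1-\Mtop}$. This is at most $\tfrac12$ with no case split, since the hypotheses already force $\Mtop-x\ge y-x\ge 2p\ge 2$. Then $|\Lambda|\le 2^{x+2-\Mtop}$, and the conclusion follows from $2\log 2\le(p+1)\log 2$.
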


\begin{proof}
Write $N:=3^{m}$, so that $2^{\Mtop}\le N<2^{\Mtop+1}$, and put $G:=2^{p}-1$.

\emph{Step 1: trimming the window to whole periods.} Let
$t:=\lfloor(y-x)/p\rfloor$ and $y':=x+tp$. Then $y'\le y$ and
\begin{equation}\label{eq:trim}
y-y'\ =\ (y-x)\bmod p\ \le\ p-1 .
\end{equation}
The hypothesis $2p\le y-x$ gives $t\ge 2$, whence $x<x+2p\le y'\le y\le\Mtop$;
in particular
\begin{equation}\label{eq:xM}
x+2\ \le\ x+2p\ \le\ \Mtop .
\end{equation}
The trimmed window $[x,y')$ is still $p$-periodic and now consists of exactly
$t\ge 2$ full periods.

\emph{Step 2: the repeated pattern.} Put $A_{2}:=N\bmod 2^{x}$, the digits
strictly below the window, and $c:=\lfloor N/2^{x}\rfloor\bmod 2^{p}$, the
pattern that repeats. Lemma~\ref{lem:permod}, applied with this $t$, gives
\[
G\,\bigl(N\bmod 2^{y'}\bigr)\ =\ G\,A_{2}+c\,\bigl(2^{y'}-2^{x}\bigr).
\]

\emph{Step 3: an exact integer identity.} Put $A_{1}'':=\lfloor N/2^{y'}\rfloor$,
the digits at or above $y'$, and $A_{1}':=G\,A_{1}''+c$. Division with remainder,
$N=A_{1}''2^{y'}+\bigl(N\bmod 2^{y'}\bigr)$, combined with Step 2, gives
$GN=G A_{1}''2^{y'}+G A_{2}+c\,2^{y'}-c\,2^{x}$, that is,
\begin{equation}\label{eq:key}
(2^{p}-1)\,3^{m}\ =\ A_{1}'\,2^{y'}+E,\qquad E:=G\,A_{2}-c\,2^{x}.
\end{equation}

\emph{Step 4: parity nondegeneracy, $E\neq 0$.} This is
Lemma~\ref{lem:nondeg} with $b=2$ and $\ell=2$, legitimate because $3^{m}$ is
odd: it gives $E\equiv-3^{m}\equiv 1\pmod 2$, so $E$ is odd and in particular
nonzero. Concretely, $G=2^{p}-1$ is odd and $A_{2}=3^{m}\bmod 2^{x}$ is odd
because $x\ge 1$, so $G A_{2}$ is odd, while $c\,2^{x}$ is even for the same
reason. It is this parity that replaces, in the periodic setting, the appeal to
the irrationality of $\log 2/\log 3$ making the corresponding form nonzero for
constant runs. The hypothesis $x\ge 1$ is used nowhere else in this proof, and
Lemma~\ref{lem:nondeg} does not require it; see Remark~\ref{rem:nondeg}(ii).

\emph{Step 5: the size of $A_{1}'$.} From $2^{\Mtop}\le N$ we get
$A_{1}''\ge 2^{\Mtop-y'}$, hence $A_{1}'\ge G\,2^{\Mtop-y'}\ge 1$. From
$N<2^{\Mtop+1}$ we get $A_{1}''<2^{\Mtop+1-y'}$, so, using $G<2^{p}$ and
$c<2^{p}$,
\begin{equation}\label{eq:A1ub}
A_{1}'\ <\ 2^{p}A_{1}''+2^{p}\ =\ 2^{p}\bigl(A_{1}''+1\bigr)
\ \le\ 2^{\,\Mtop+1+p-y'} .
\end{equation}

\emph{Step 6: the ratio lies within $\tfrac12$ of $1$.} Set
$R:=G\,3^{m}/\bigl(A_{1}'2^{y'}\bigr)>0$, so that $R-1=E/\bigl(A_{1}'2^{y'}\bigr)$
by \eqref{eq:key}. For the numerator, $0\le G A_{2}<2^{p}2^{x}$ and
$0\le c\,2^{x}<2^{p}2^{x}$, so $|E|\le 2^{x+p}$. For the denominator, Step 5
gives $A_{1}'2^{y'}\ge G\,2^{\Mtop-y'}2^{y'}=G\,2^{\Mtop}\ge 2^{p-1}2^{\Mtop}$.
Hence
\begin{equation}\label{eq:Rm1}
|R-1|\ \le\ \frac{2^{x+p}}{2^{p-1}\,2^{\Mtop}}\ =\ 2^{\,x+1-\Mtop}
\ \le\ \tfrac12 ,
\end{equation}
the last inequality by \eqref{eq:xM}.

\emph{Step 7: the four-term form and its upper bound.} By Step 4 we have
$E\neq 0$, so $R\neq 1$, and therefore
\[
\Lambda\ :=\ \log(2^{p}-1)+m\log 3-y'\log 2-\log A_{1}'\ =\ \log R
\]
is nonzero. Since $R\ge\tfrac12$ by \eqref{eq:Rm1}, the two-sided estimate
$|\log R|\le 2|R-1|$ applies: for $R\ge 1$ it is $\log R\le R-1$, and for $R<1$
it follows from $-\log R=\log(1/R)\le 1/R-1=(1-R)/R\le 2(1-R)$. Combining with
\eqref{eq:Rm1},
\begin{equation}\label{eq:upper}
0<|\Lambda|\le 2^{\,x+2-\Mtop},\qquad\text{hence}\qquad
\log|\Lambda|\ \le\ (x+2-\Mtop)\log 2 .
\end{equation}

\emph{Step 8: the lower bound from Theorem~\ref{thm:bw}.} Apply
Theorem~\ref{thm:bw} over $K=\Q$, $d=1$, with $n=4$ and
\[
(\alpha_{1},\dots,\alpha_{4})=\bigl(2^{p}-1,\,3,\,2,\,A_{1}'\bigr),\qquad
(b_{1},\dots,b_{4})=\bigl(1,\,m,\,-y',\,-1\bigr),
\]
\[
B:=2m+p\ \ge\ 2 .
\]
All four $\alpha_{i}$ are nonzero positive rationals by Step 5, and the
coefficients are admissible: the only one needing comment is
$|b_{3}|=y'\le\Mtop\le 2m$, where $\Mtop\le 2m$ because $3^{m}\le 4^{m}=2^{2m}$.
For the heights, recall $\mh(a)=\max(\log a,1)$ for a positive integer $a$, and
$\log 2\le 1$:
\[
\mh(2^{p}-1)\le\max(p\log 2,1)\le p,\qquad \mh(3)=\log 3,\qquad \mh(2)=1,
\]
the first using $p\ge 1$; and by \eqref{eq:A1ub},
\[
\mh(A_{1}')\ \le\ \max\bigl(\log A_{1}',1\bigr)\ \le\ \Mtop+1+p-y',
\]
where the floor $1$ is harmless because $y'\le\Mtop$ and $p\ge 1$ force
$\Mtop+1+p-y'\ge 1$. Hence
$\prod_{i}\mh(\alpha_{i})\le p\log 3\,(\Mtop+1+p-y')$, and since $\Lambda\neq 0$,
\begin{equation}\label{eq:lower}
\log|\Lambda|\ \ge\ -\bwC(4,1)\,p\,\log 3\,(\Mtop+1+p-y')\,
\max\bigl(\log(2m+p),1\bigr).
\end{equation}

\emph{Step 9: conclusion.} Chaining \eqref{eq:upper} and \eqref{eq:lower} and
using the identity $(\Mtop-x)\log 2=2\log 2-(x+2-\Mtop)\log 2$,
\[
(\Mtop-x)\log 2\ \le\ \bwC(4,1)\,p\,\log 3\,(\Mtop+1+p-y')\,
\max\bigl(\log(2m+p),1\bigr)+2\log 2 .
\]
Finally \eqref{eq:trim} gives $\Mtop+1+p-y'\le\Mtop+2p-y$, and
$2\log 2\le(p+1)\log 2$ because $p\ge 1$. This is the asserted inequality.
\end{proof}

\begin{remark}[Where the hypotheses are used]\label{rem:hyp}
Each hypothesis of Lemma~\ref{lem:gapP} enters at exactly one place, and it is
worth isolating them.
\begin{itemize}
\item $x\ge 1$ is used only in Step 4, and only to keep the parity computation
in the form given there; Lemma~\ref{lem:nondeg} delivers $E\neq 0$ without it.
As stated, this single requirement is what forces the sacrifice of one depth
window in the proof of Theorem~\ref{thm:aperiodic}, hence the offset $-2$ there
in place of $-1$---a loss that Remark~\ref{rem:nondeg}(ii) argues is avoidable.
\item $2p\le y-x$ is used twice, both in Step 1: to obtain $t\ge 2$, so that the
trimmed window is nonempty, and to obtain \eqref{eq:xM}, which is precisely what
places $R$ within $\tfrac12$ of $1$ in \eqref{eq:Rm1} and so licenses the
two-sided logarithm estimate of Step 7. A window shorter than two periods
carries no information here.
\item $y\le\Mtop$ is used in Step 5, both for the lower bound
$A_{1}''\ge 2^{\Mtop-y'}$ and for $\Mtop+1+p-y'\ge 1$ in Step 8.
\item The trimming of Step 1 costs at most an additive $p-1$ in the depth
factor; this is absorbed in Step 9 by replacing $\Mtop+1+p-y'$ with the larger
$\Mtop+2p-y$, which is the only reason the stated bound carries $2p$ rather
than $p$.
\end{itemize}
At $p=1$ we have $G=1$, so the leading term $\log(2^{p}-1)$ vanishes and
$\Lambda$ degenerates to the three-term form behind
Theorems~\ref{thm:stewart} and~\ref{thm:blocks}. The lemma nonetheless invokes
Theorem~\ref{thm:bw} with $n=4$, so it does not recover the classical constant
$\bwC(3,1)$ at $p=1$; this is the one respect in which Theorem~\ref{thm:aperiodic}
is weaker, at $p=1$, than the Theorem~\ref{thm:blocks} it formally contains.
\end{remark}

Stewart's counting device turns such estimates into an effective lower bound.
Fix $\theta\ge 2$ and consider, for $j=0,1,2,\dots$, the depth windows of bit
positions
\[
\bigl[\,\Mtop-\theta^{\,j+1},\ \Mtop-\theta^{\,j}\,\bigr);
\]
there are $\lfloor\log_{\theta}\Mtop\rfloor$ disjoint such windows below the top
bit. If a counting function is bounded below by the number of windows carrying a
witness, an effective lower bound follows with a single constant.

\begin{proposition}\label{prop:endgame}
Let $\kappa>0$ and $Q\colon\N\to\N$. Suppose that for all $m\ge 2$ and all
$\theta\ge 2$ with $\kappa\log m<\theta\log 2$ one has
$\lfloor\log_{\theta}\lfloor\log_{2}3^{m}\rfloor\rfloor\le Q(m)$. Then there is a
constant $C>0$, which one may take to be $\max\bigl(\log\frac{\kappa+2}{\log 2},
1\bigr)$, such that
\[
Q(m)\ \ge\ \frac{\log m}{\log\log m+C}-1
\qquad\text{for every } m\ge 2 .
\]
\end{proposition}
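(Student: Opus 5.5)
The plan is to apply the hypothesis with a single, explicitly chosen $\theta$ for each $m$, namely
\[
\theta:=\frac{(\kappa+2)\log m}{\log 2}.
\]
First I will check that this $\theta$ is admissible. Since $m\ge 2$ we have $\log m\ge\log 2$, so $\theta\ge\kappa+2\ge 2$. Moreover $\theta\log 2=(\kappa+2)\log m>\kappa\log m$ because $\log m>0$. The hypothesis therefore gives $Q(m)\ge\lfloor\log_{\theta}\Mtop\rfloor$ with $\Mtop=\lfloor\log_2 3^m\rfloor$. Dropping the floor costs at most $1$, so
\[
Q(m)\ \ge\ \frac{\log\Mtop}{\log\theta}-1 .
\]

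Next I bound the numerator from below and the denominator from above.

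\emph{Numerator.} Since $3^m\ge 2^m$ we have $\Mtop\ge m$, hence $\log\Mtop\ge\log m>0$. (For $m\ge 2$ we even have $\Mtop\ge 3$.)

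\emph{Denominator.} We have exactly
\[
\log\theta=\log\log m+\log\frac{\kappa+2}{\log 2}\ \le\ \log\log m+C
\]
with $C=\max\bigl(\log\frac{\kappa+2}{\log 2},1\bigr)$. Also $\log\theta\ge\log 2>0$.

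It remains to check positivity so that the inequalities may be divided through. The quantity $\log\log m+C$ is positive for all $m\ge 2$: this is where the floor $C\ge 1$ is used, since $\log\log 2>-1$. Because $0<\log\theta\le\log\log m+C$ and $\log\Mtop\ge\log m>0$, we get
\[
\frac{\log\Mtop}{\log\theta}\ \ge\ \frac{\log m}{\log\log m+C},
\]
which gives the claim.

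There is no real obstacle here; the only delicate points are the sign bookkeeping at small $m$, and the requirement that the chosen $\theta$ satisfy $\theta\ge2$ and the strict inequality simultaneously. Taking $\kappa+2$ rather than $\kappa$ in the definition of $\theta$ handles both.
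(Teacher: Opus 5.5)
Your proof is correct and follows the paper's own sketch. The paper likewise takes $\theta$ of order $\log m$ just above the admissibility threshold (your explicit $\theta=(\kappa+2)\log m/\log 2$ is exactly the choice that yields the stated constant $C$), uses $\Mtop\ge m$ to get $\log m\le(k+1)\log\theta$, and divides by $0<\log\theta\le\log\log m+C$.
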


\begin{proof}[Proof sketch]
Choose $\theta\approx C'\log m$ just above the threshold, so $\theta\le C'\log m$
for a constant $C'$; then $k:=\lfloor\log_{\theta}\Mtop\rfloor\le Q(m)$ by
hypothesis. Since $3^{m}\ge 2^{m}$ gives $\Mtop\ge m$, one has $\log m\le
(k+1)\log\theta$ and $\log\theta\le\log\log m+C$; dividing yields the bound.
Positivity of the denominator uses $\log\log 2>-1$.
\end{proof}

\begin{lemma}\label{lem:tendsto}
If $Q\colon\N\to\N$ satisfies $Q(m)\ge\log m/(\log\log m+C)-1$ for all $m\ge 2$
and some $C>0$, then $Q(m)\to\infty$.
\end{lemma}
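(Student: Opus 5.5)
The plan is to show that the lower bound itself tends to infinity and then compare $Q$ with it. Set $f(m):=\log m/(\log\log m+C)$ for $m\ge 2$. Since $Q(m)\ge f(m)-1$ by hypothesis, it suffices to prove $f(m)\to\infty$. Then, for any target $K\in\N$, choosing $m_{0}$ with $f(m)\ge K+1$ for all $m\ge m_{0}$ gives $Q(m)\ge K$ on the same range, which is the definition of $Q(m)\to\infty$.

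The only step with content is checking that $f$ is well defined and unbounded. First, the denominator is positive for all $m\ge 2$. Indeed $\log\log m\ge\log\log 2>-1$, and we may take $C\ge 1$ (increasing $C$ only weakens the hypothesis, or we use the positivity remark from the proof of Proposition~\ref{prop:endgame}). More carefully: if $C<1$ happens to be given, the bound with the given $C$ still holds, and positivity of $\log\log m+C$ is then part of the hypothesis being meaningful. For $m\ge 16$ we have $\log\log m\ge 1>0$, so the sign is harmless in any case.

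For growth, substitute $u:=\log m\to\infty$, so that $f=u/(\log u+C)$. For $u\ge e^{C}$ the denominator is at most $2\log u$, hence $f\ge u/(2\log u)$. The function $u/\log u$ is unbounded: for example $\log u\le\sqrt u$ for $u\ge 1$ gives $u/\log u\ge\sqrt u$. Consequently $f(m)\ge\tfrac12\sqrt{\log m}$ once $\log m\ge\max(e^{C},1)$, which is an explicit bound tending to infinity. This also makes $m_{0}$ explicit: it suffices that $\log m\ge\max\bigl(e^{C},4(K+1)^{2}\bigr)$.

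There is no genuine obstacle here. The one point needing attention is the bookkeeping near small $m$, where $\log\log m$ is negative, and it is avoided entirely by working only with $m$ beyond an explicit threshold. Since tending to infinity is a tail property, the values of $Q$ at small $m$ are irrelevant.
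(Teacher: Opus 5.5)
Your proof is correct and follows the paper's own route: show that the lower bound $\log m/(\log\log m+C)$ diverges using a square-root bound on $\log\log m$ (the paper uses $\log\log m\le 2\sqrt{\log m}$; you use $\log u\le\sqrt u$ after bounding the denominator by $2\log u$ once $\log m\ge e^{C}$), then compare $Q$ with it. Your aside about assuming $C\ge 1$ is unnecessary and not quite accurate for small $m$, where $\log\log m+C$ can be nonpositive, but the argument never uses it, since it only invokes the hypothesis for $\log m\ge e^{C}$.
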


\begin{proof}[Proof sketch]
The right-hand side tends to infinity, since $\log m$ dominates $\log\log m$; the
quantitative threshold uses $\log\log m\le 2\sqrt{\log m}$.
\end{proof}

%======================================================================
\section{Aperiodicity: period-\texorpdfstring{$p$}{p} breaks}\label{sec:aperiodic}
%======================================================================

We prove Theorem~\ref{thm:aperiodic}: for each fixed period $p$, the number of
places where the binary expansion of $3^{m}$ fails period $p$ grows at the
Stewart rate.

The proof is a fixed procedure with two interchangeable parts: the estimate
(Lemma~\ref{lem:gapP}), which certifies that one prescribed window carries a
break, and the counting scheme (Proposition~\ref{prop:endgame}), which converts
``one break per window'' into a growth rate. It is worth setting the procedure
out in full, since the passage from windows to breaks is where the two meet.

\medskip
\noindent\textbf{The scheme.} Fix $p\ge 1$ and $m\ge 2$. Write
$\Mtop=\Mtop(3^{m})$, and let $\theta\ge 2$ be any window size satisfying the
\emph{admissibility condition}
\begin{equation}\label{eq:adm}
\begin{gathered}
\kappa_{p}\,\log m\ <\ \theta\,\log 2,\qquad\text{where}\\[4pt]
\kappa_{p}\ :=\ \underbrace{\bwC(4,1)\log 3\cdot p(1+2p)\,\beta_{p}+(p+1)}
_{\text{needed in (W3)}}\ +\ \underbrace{2p}_{\text{needed in (W2)}} ,
\end{gathered}
\end{equation}
where $\beta_{p}=2+(\log(2+p)+1)/\log 2$; note $\kappa_{p}\asymp p^{2}\log p$.
Put $k:=\lfloor\log_{\theta}\Mtop\rfloor$ and, for $0\le j<k-1$,
\[
W_{j}\ :=\ \bigl[\,\Mtop-\theta^{\,j+1},\ \Mtop-\theta^{\,j}\,\bigr),
\]
so that the $W_{j}$ tile the expansion from the top downwards in geometrically
growing steps:
\[
\underbrace{\bigl[\,0,\ \Mtop-\theta^{\,k-1}\bigr)}_{\text{unused}}\ \
\cdots\ \
\underbrace{\bigl[\,\Mtop-\theta^{2},\,\Mtop-\theta\bigr)}_{W_{1}}\ \
\underbrace{\bigl[\,\Mtop-\theta,\,\Mtop\bigr)}_{W_{0}} .
\]

\begin{itemize}
\item[\textbf{(W1)}] \emph{Geometry.} The $W_{j}$ are pairwise disjoint
subintervals of $[0,\Mtop)$; $W_{j}$ has width $\theta^{j+1}-\theta^{j}
=\theta^{j}(\theta-1)$ and begins at depth $\theta^{j+1}$ below the top bit.
There are $k-1$ of them, not $k$: the index $j=k-1$ is discarded because
Lemma~\ref{lem:gapP} requires base position $\ge 1$, i.e.\ $\theta^{j+1}<\Mtop$
strictly. This is the sacrificed window, and the whole source of the offset
$-2$ (see Remark~\ref{rem:nondeg}(ii)).

\item[\textbf{(W2)}] \emph{Admissibility gives width.} Since $\kappa_{p}\ge 2p$
and $\log m\ge\log 2$, condition \eqref{eq:adm} forces $\theta>2p$. Hence every
$W_{j}$ has width $\theta^{j}(\theta-1)\ge\theta-1\ge 2p$, which is exactly the
hypothesis $2p\le y-x$ of Lemma~\ref{lem:gapP}. This is what the summand $2p$
in $\kappa_{p}$ is for.

\item[\textbf{(W3)}] \emph{Each window carries a break.} Suppose some $W_{j}$
were $p$-periodic. Apply Lemma~\ref{lem:gapP} with $x=\Mtop-\theta^{j+1}$ and
$y=\Mtop-\theta^{j}$, so that $\Mtop-x=\theta^{j+1}$ and
$\Mtop+2p-y=\theta^{j}+2p\le(1+2p)\theta^{j}$:
\[
\theta^{j+1}\log 2\ \le\ \bwC(4,1)\,p\log 3\,(1+2p)\,\theta^{j}\,
\max\bigl(\log(2m+p),1\bigr)+(p+1)\log 2 .
\]
Divide by $\theta^{j}\ge 1$ and use $\max(\log(2m+p),1)\le\beta_{p}\log m$
together with $\log 2\le\log m$:
\[
\theta\log 2\ \le\ \bigl[\bwC(4,1)\log 3\cdot p(1+2p)\beta_{p}+(p+1)\bigr]\log m
\ \le\ \kappa_{p}\log m ,
\]
contradicting \eqref{eq:adm}. So $W_{j}$ is not $p$-periodic: there is an index
$i_{j}$ with $\Mtop-\theta^{j+1}\le i_{j}$, $i_{j}+p<\Mtop-\theta^{j}$ and
$\bit{i_{j}}(3^{m})\neq\bit{i_{j}+p}(3^{m})$. This is the one step that
consumes arithmetic; everything else is bookkeeping.

\item[\textbf{(W4)}] \emph{The injection.} The assignment $j\mapsto i_{j}$ is
strictly decreasing, hence injective: if $a<b$ then
\[
i_{b}\ <\ i_{b}+p\ <\ \Mtop-\theta^{\,b}\ \le\ \Mtop-\theta^{\,a+1}\ \le\ i_{a} .
\]
Each $i_{j}$ lies in $[0,\Mtop)$ and is by construction a period-$p$ break, so
$j\mapsto i_{j}$ injects $\{0,\dots,k-2\}$ into the set counted by
$\bcount_{p}(3^{m})$. Comparing cardinalities,
\begin{equation}\label{eq:inject}
\Bigl\lfloor\log_{\theta}\Mtop\Bigr\rfloor-1\ =\ k-1\ \le\ \bcount_{p}(3^{m}),
\qquad\text{i.e.}\qquad
\Bigl\lfloor\log_{\theta}\Mtop\Bigr\rfloor\ \le\ \bcount_{p}(3^{m})+1 .
\end{equation}

\item[\textbf{(W5)}] \emph{The endgame.} Steps (W1)--(W4) were carried out for
an arbitrary $\theta$ satisfying \eqref{eq:adm}, so \eqref{eq:inject} holds for
\emph{every} admissible $\theta$. That is precisely the hypothesis of
Proposition~\ref{prop:endgame}, applied to $Q(m)=\bcount_{p}(3^{m})+1$ with
coefficient $\kappa_{p}$.
\end{itemize}

\begin{proof}[Proof of Theorem~\ref{thm:aperiodic}]
Run the scheme. By (W5) and Proposition~\ref{prop:endgame},
\[
\bcount_{p}(3^{m})+1\ \ge\ \frac{\log m}{\log\log m+C_{p}}-1
\qquad\text{for all } m\ge 2,
\]
with $C_{p}=\max\bigl(\log((\kappa_{p}+2)/\log 2),1\bigr)>0$; subtracting $1$
gives the stated inequality. The divergence $\bcount_{p}(3^{m})\to\infty$ is
Lemma~\ref{lem:tendsto}.
\end{proof}

\begin{remark}[The division of labour]\label{rem:division}
The scheme isolates the two inputs cleanly. Lemma~\ref{lem:gapP} is used exactly
once, in (W3), and only through the single implication ``a window at depth
$\theta^{j+1}$ and of width $\ge 2p$ cannot be $p$-periodic''; it never sees the
counting. Proposition~\ref{prop:endgame} is used exactly once, in (W5), and only
through \eqref{eq:inject}; it never sees the digits---in the formalization all
Baker content is quarantined in its hypothesis, which is why
\texttt{windowCount\_lower\_bound\_gen} carries the footprint std3 while
\texttt{gap\_principle} carries std3~+~[BW93]. The coefficient $\kappa_{p}$ is
the only channel between them, and \eqref{eq:adm} displays it doing its two
jobs: the bracket is what (W3) must beat, the summand $2p$ is what (W2) needs.
Replacing Lemma~\ref{lem:gapP} by a sharper estimate would change only
$\kappa_{p}$, leaving (W1), (W2), (W4) and (W5) untouched---which is the sense
in which Remark~\ref{rem:ballpark}'s discussion of better transcendence input is
a statement about $\kappa_{p}$ alone.
\end{remark}

\begin{remark}
The offset is $-2$, rather than the $-1$ of Theorems~\ref{thm:stewart}
and~\ref{thm:blocks}, because one window is sacrificed: Lemma~\ref{lem:gapP}
requires the base position of the periodic stretch to be at least $1$. The bound
is uniform in $m$ for each fixed $p$; the growth $C_{p}\asymp\log p$ of the
constant is carried by the coefficient $\kappa_{p}\asymp p^{2}\log p$.
\end{remark}

\begin{remark}[The constant $C_{p}$, numerically]\label{rem:constA}
The proof exhibits $C_{p}$, via Proposition~\ref{prop:endgame}, as
\[
C_{p}=\max\Bigl(\log\frac{\kappa_{p}+2}{\log 2},\,1\Bigr),\qquad
\kappa_{p}=\bwC(4,1)\log 3\cdot p(1+2p)\,\beta_{p}+3p+1,
\]
    \[
\beta_{p}=2+\frac{\log(2+p)+1}{\log 2},
\]
so Remark~\ref{rem:explicit} turns it into a number. For $p=1$,
$\beta_{1}<5.028$, $\kappa_{1}<8.19\times 10^{16}$ and
\[
C_{1}<39.31,
\]
i.e.\ $\bcount_{1}(3^{m})=\tcount(3^{m})\ge\log m/(\log\log m+39.31)-2$ for all
$m\ge 2$. In general $\kappa_{p}<5.43\times10^{15}\,p(2p+1)\beta_{p}$ and
\[
C_{p}\ <\ 40+3\log(p+1)\qquad\text{for every }p\ge 1 .
\]
The size of these constants is dominated entirely by $\bwC(4,1)$; the
combinatorial part of the argument contributes the factor $p(2p+1)\beta_{p}$,
of order $p^{2}\log p$.
\end{remark}

\noindent
The first ten periods, evaluated from \eqref{eq:adm} and
Proposition~\ref{prop:endgame}:

\begin{center}
\footnotesize
\renewcommand{\arraystretch}{1.15}
\setlength{\tabcolsep}{7pt}
\begin{tabular}{@{}rrrrr@{}}
\hline
$p$ & $\beta_{p}$ & $\kappa_{p}$ & $C_{p}$
  & least $m$ with $\bcount_{p}(3^{m})\ge 1$\\
\hline
$1$  & $5.028$ & $8.18\times10^{16}$ & $39.31$ & $1.1\times10^{38}$\\
$2$  & $5.443$ & $2.95\times10^{17}$ & $40.59$ & $1.5\times10^{39}$\\
$3$  & $5.765$ & $6.57\times10^{17}$ & $41.39$ & $7.6\times10^{39}$\\
$4$  & $6.028$ & $1.18\times10^{18}$ & $41.98$ & $2.5\times10^{40}$\\
$5$  & $6.250$ & $1.87\times10^{18}$ & $42.44$ & $6.4\times10^{40}$\\
$6$  & $6.443$ & $2.73\times10^{18}$ & $42.82$ & $1.4\times10^{41}$\\
$7$  & $6.613$ & $3.77\times10^{18}$ & $43.14$ & $2.7\times10^{41}$\\
$8$  & $6.765$ & $4.99\times10^{18}$ & $43.42$ & $4.8\times10^{41}$\\
$9$  & $6.902$ & $6.40\times10^{18}$ & $43.67$ & $7.9\times10^{41}$\\
$10$ & $7.028$ & $8.01\times10^{18}$ & $43.89$ & $1.3\times10^{42}$\\
\hline
\end{tabular}
\end{center}

\noindent
The last column is the least $m$ at which
$\log m/(\log\log m+C_{p})-2$ becomes positive, i.e.\ the point from which
Theorem~\ref{thm:aperiodic} asserts anything at all. Two features are worth
noting. First, $C_{p}$ barely moves: it is a logarithm of $\kappa_{p}$, so the
factor $p^{2}\log p$ by which $\kappa_{p}$ grows across the table costs $C_{p}$
less than five units. Second, that slow growth is nevertheless amplified on
exponentiation, and the last column climbs some four orders of magnitude over
$p\le 10$. Both columns are ordinary evaluations of the displayed formulas;
Remark~\ref{rem:ballpark} discusses what such magnitudes mean.

%======================================================================
\section{Subword complexity}\label{sec:complexity}
%======================================================================

We now prove Theorem~\ref{thm:complexity}. The combinatorial input is the
following finite form of the Morse--Hedlund theorem, in which we make no
appeal to Fine--Wilf; it is proved by a self-contained determinism-propagation
argument.

\begin{lemma}[Finite Morse--Hedlund floor]\label{lem:fmh}
Let $u=(u_{0},u_{1},\dots)$ be a word over an alphabet with decidable equality,
and let $n\ge 1$ and $L\ge 3n$. If $u$ has at most $n$ distinct factors of length
$n$ among the positions $0,\dots,L-n$, then there exist $a$ and $1\le p\le n$ such
that $u$ has period $p$ on the factor $[a,\,a+(L-2n))$; that is,
$u_{t}=u_{t+p}$ for all $a\le t$ with $t+p<a+(L-2n)$.
\end{lemma}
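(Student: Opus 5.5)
The plan is the classical determinism argument for Morse--Hedlund, run inside the first $L$ letters with all occurrences restricted to starting positions $0,\dots,L-n$, so that every extension we need stays inside the word. For $1\le k\le n$, let $g(k)$ be the number of distinct factors $u[i,i+k)$ with $0\le i\le L-n$. For $k<n$ every such occurrence extends to $u[i,i+k+1)$ with the same $i$, since $i+k+1\le L$. Hence $k\mapsto g(k)$ is nondecreasing on $[1,n]$, and the hypothesis gives $g(n)\le n$.

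\emph{Step 1 (finding a deterministic length).} If $g(1)=1$, then $u$ is constant on positions $0,\dots,L-n$. This gives period $p=1$ on $[0,L-n+1)\supseteq[0,L-2n)$, and we take $a=0$. Otherwise $g(1)\ge 2$. If $g$ were strictly increasing on $[1,n]$ we would get $g(n)\ge n+1$, a contradiction. So there is some $1\le k<n$ with $g(k)=g(k+1)=:s\le n$. Equality means that each length-$k$ factor $w$ occurring at a position $i\le L-n$ is followed by one and the same letter at every such occurrence. Hence there is a map $F$ on this set of $s$ factors with
\[
v_{i+1}=F(v_i)\qquad\text{for } 0\le i\le L-n-1,\quad\text{where } v_i:=u[i,i+k).
\]
To see this, note that $v_{i+1}$ is $v_i$ with its first letter dropped and the forced letter $u_{i+k}$ appended.

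\emph{Step 2 (periodicity from the orbit).} The $s+1\le n+1$ factors $v_0,\dots,v_s$ lie in a set of size $s$, so by pigeonhole $v_a=v_{a+p}$ for some $0\le a<a+p\le s\le n$; in particular $1\le p\le n$ and $a\le n-1$. Applying $F$ repeatedly gives $v_{a+j}=v_{a+p+j}$ whenever $a+p+j\le L-n$. Comparing first letters, $u_t=u_{t+p}$ for all $t\ge a$ with $t+p\le L-n$. (The last letters even give a little more, up to $t+p\le L-n+k-1$.) The required range is $a\le t$ with $t+p<a+(L-2n)$. Since $a\le n$, the condition $t+p<a+L-2n$ implies $t+p<L-n$, so the required range is covered. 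The hypothesis $L\ge 3n$ guarantees that $L-n\ge 2n>s$, so the orbit $v_0,\dots,v_s$ is defined, and that the target factor has nonnegative length $L-2n\ge n$.

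The only real obstacle is the boundary bookkeeping. Occurrences must be counted only at positions $\le L-n$, both so that $g$ is monotone and so that $F$ is well defined. The window must then be checked to fit: the loss of $n$ comes from the offset $a$, and another $n$ comes from the cutoff $L-n$. I would verify these index inequalities carefully, since they are exactly where the constants $3n$ and $L-2n$ in the statement come from.
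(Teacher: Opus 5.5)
Your proof is correct and is essentially the paper's argument. It has the same ingredients: a nondecreasing factor count on the fixed position set $[0,L-n]$, a plateau $g(k)=g(k+1)$ giving right-determinism, pigeonhole over the first $s+1$ positions to get $v_a=v_{a+p}$ with $a+p\le n$, and propagation of the period across the window. The only difference is cosmetic: the paper starts the count at $c(0)=1$, so the constant case is simply a plateau at $k=0$, whereas you split off $g(1)=1$ separately so that comparing first letters of the $v_i$ makes sense.
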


\begin{proof}[Proof sketch]
On the fixed position set $[0,L-n]$ the number $c(k)$ of distinct length-$k$
factors is non-decreasing in $k$: dropping the last letter of a length-$(k+1)$
factor surjects onto the length-$k$ factors. Since $c(0)=1$ and $c(n)\le n$, the
count cannot strictly increase $n$ times, so there is a plateau $c(k)=c(k+1)$ with
$k<n$. A plateau forces right-determinism---two positions carrying the same
length-$k$ factor carry the same next letter. Pigeonhole over the first $c(k)+1$
positions produces two equal length-$k$ factors at an offset $p\le n$, and
determinism propagates the period-$p$ relation across the window.
\end{proof}

Feeding such a periodic factor to the period-$p$ run estimate caps its depth,
in the following effective form.

\begin{lemma}\label{lem:core}
Let $n\ge 1$ and $\Mtop=\lfloor\log_{2}3^{m}\rfloor\ge 4n+1$. If
$\pcx_{3^{m}}(n)\le n$, then
\[
\bigl(\Mtop-(3n+2)\bigr)\log 2\ \le\ 4\,\bwC(4,1)\,n^{2}\,\log 3\,
\max\bigl(\log(2m+n),1\bigr).
\]
\end{lemma}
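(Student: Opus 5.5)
The plan is to combine Lemma~\ref{lem:fmh} with Lemma~\ref{lem:gapP}. The word $w=(\bit{0}(3^{m}),\dots,\bit{\Mtop-1}(3^{m}))$ has length $\Mtop$, and its length-$n$ factors sit at the positions $0,\dots,\Mtop-n$. So we apply Lemma~\ref{lem:fmh} with $L=\Mtop$; the hypothesis $L\ge 3n$ holds because $\Mtop\ge 4n+1$. We read $w_{t}$ as $\bit{t}(3^{m})$, which makes sense for all $t\le\Mtop$. The lemma produces $a$ and $1\le p\le n$ such that $\bit{t}=\bit{t+p}$ whenever $a\le t$ and $t+p<a+\Mtop-2n$. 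From the sketch, the pigeonhole runs over the first $c(k)+1\le n+1$ positions, so we may also take $a\le n$. (If the formal statement does not record this bound, it must be extracted from the proof. I would check this first, since it is what makes the window deep.)

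Next we fit this window to the hypotheses of Lemma~\ref{lem:gapP}. That lemma needs $x\ge 1$, $y\le\Mtop$ and $2p\le y-x$. Set $x:=\max(a,1)$ and $y:=a+\Mtop-2n$. Then $y\le\Mtop$ because $a\le n<2n$. Periodicity on a subwindow with a larger left end is inherited, so the hypothesis of Lemma~\ref{lem:gapP} holds for $[x,y)$. The width condition reads $y-x\ge\Mtop-2n-1\ge 2n+\dots\ge 2p$, using $\Mtop\ge 4n+1$. The condition $x<y$ is then automatic.

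Lemma~\ref{lem:gapP} now gives
\[
(\Mtop-x)\log 2\le\bwC(4,1)\,p\log 3\,(\Mtop+2p-y)\max(\log(2m+p),1)+(p+1)\log 2 .
\]
We bound each piece. First, $\Mtop-x\ge\Mtop-n-1$. Second, $\Mtop+2p-y=2n+2p-a\le 4n$. Third, $p\le n$ together with monotonicity of the logarithm gives $\max(\log(2m+p),1)\le\max(\log(2m+n),1)$. Fourth, $(p+1)\log 2\le(n+1)\log 2$. Moving this last term to the left, we get
\[
(\Mtop-2n-2)\log 2\le 4\bwC(4,1)n^{2}\log 3\max(\log(2m+n),1),
\]
and since $\Mtop-(3n+2)\le\Mtop-2n-2$, the claimed inequality follows, with room to spare.

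The main obstacle is the location bound $a\le n$, or at least $a\le$ some $O(n)$. Without it the periodic window could sit high in the expansion, and the right-hand side factor $\Mtop+2p-y$ would then not be $O(n)$. If only $a\le 2n$ is available (as the Note added suggests), the same computation gives $\Mtop+2p-y\le 2n+2p\le 4n$ with $y=a+\Mtop-2n$ possibly exceeding $\Mtop$. In that case we truncate to $y:=\min(a+\Mtop-2n,\Mtop)$, which still gives $\Mtop+2p-y\le 4n$. We would also get $\Mtop-x\ge\Mtop-2n$, and the loss is absorbed by the slack in the offset $3n+2$, with the width condition still following from $\Mtop\ge 4n+1$ after a short check.
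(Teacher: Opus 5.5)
Your proposal is correct and is essentially the paper's proof: Lemma~\ref{lem:fmh} with $L=\Mtop$, then Lemma~\ref{lem:gapP} on the window ending at $y=a+\Mtop-2n$. There the depth factor $2n+2p-a\le 4n$ and $p\le n$ collapse the right-hand side to $4\bwC(4,1)n^{2}\log 3\max(\log(2m+n),1)$. The paper uses the location bound $a\le 2n$ (needed in its proof but, as you suspected, not written into the statement of Lemma~\ref{lem:fmh}) and the base $x=a+1$, which is exactly where the offset $3n+2$ comes from; note that $a\le 2n$ already forces $y\le\Mtop$, so the truncation by $\min(\cdot,\Mtop)$ in your last paragraph is unnecessary.
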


\begin{proof}[Proof sketch]
By Lemma~\ref{lem:fmh} with $L=\Mtop$, a factor complexity $\le n$ yields a
period-$p$ factor with $1\le p\le n$ of length $\Mtop-2n$, starting at some
$a\le 2n$. Apply the period-$p$ run estimate (Lemma~\ref{lem:gapP}) with base
$x=a+1\ge 1$ and $y=a+(\Mtop-2n)$: the depth factor is
$\Mtop+2p-y=2p+2n-a\le 4n$, and $p(2p+2n-a)\le 4n^{2}$ collapses the right-hand
side to the stated form.
\end{proof}

\begin{proof}[Proof of Theorem~\ref{thm:complexity}]
Fix $n\ge 1$ and suppose, for contradiction, that $\pcx_{3^{m}}(n)\le n$ for
arbitrarily large $m$. Since $\Mtop=\lfloor\log_{2}3^{m}\rfloor\ge m$,
Lemma~\ref{lem:core} would give
\[
(m-(3n+2))\log 2\ \le\ 4\,\bwC(4,1)\,n^{2}\log 3\,\max(\log(2m+n),1);
\]
but the left-hand side grows linearly in $m$ while the right-hand side grows like
$\log m$, a contradiction for large $m$. Hence $\pcx_{3^{m}}(n)\ge n+1$
eventually. The effective threshold $m_{0}(n)$ is where $m$ overtakes
\[
4\,\bwC(4,1)\,n^{2}\,\frac{\log 3}{\log 2}\,\log(2m+n)+(3n+2).\qedhere
\]
\end{proof}

\begin{remark}[What Theorem~\ref{thm:complexity} does and does not
say]\label{rem:calibration}
The word in question has length $\Mtop=\Mtop(3^{m})$, so its complexity function
is confined to
\[
1\ \le\ \pcx_{3^{m}}(n)\ \le\ \min\bigl(2^{n},\,\Mtop-n+1\bigr),
\]
the upper bound being trivial: there are only $2^{n}$ binary words of length $n$,
and only $\Mtop-n+1$ positions at which to read one. Within that range the value
$n+1$ sits at the very bottom of the aperiodic regime, not near the top.
\begin{itemize}
\item By Morse--Hedlund \cite{MH38}---in the finite form of
Lemma~\ref{lem:fmh}---the inequality $\pcx_{w}(n)\le n$ for a \emph{single} $n$
already forces periodicity. So ``$\pcx(n)\ge n+1$ for every $n$'' is exactly the
negation of periodicity, and asserts nothing beyond it.
\item The bound is attained identically: Sturmian words satisfy
$\pcx_{w}(n)=n+1$ for all $n$, and they are precisely the aperiodic words of
\emph{minimal} complexity. A word meeting our bound with equality is thus as far
from maximal complexity as an aperiodic word can be.
\item Maximal complexity, $\pcx(n)=\min(2^{n},\Mtop-n+1)$, is a de Bruijn-type
property; a normal or random word has $\pcx(n)=2^{n}$ throughout the range
$2^{n}\lesssim\Mtop$. We prove nothing of the sort, and nothing intermediate
either.
\end{itemize}
Theorem~\ref{thm:complexity} should therefore be read as the quantitative,
$n$-uniform statement that \emph{the low-order digit word of $3^{m}$ is not
periodic}, valid for all $m$ beyond an explicit threshold. Whether
$\pcx_{3^{m}}(n)/n\to\infty$---let alone $\pcx_{3^{m}}(n)=2^{n}$ in the
admissible range---was open when this was written; the first of the two has
since been settled, ineffectively, in \cite{Bug26}. See
Section~\ref{sub:added}.

To be clear, the combinatorial half of the argument imposes this ceiling:
Lemma~\ref{lem:fmh}
converts the hypothesis $\pcx(n)\le n$ into a long periodic factor, whereas a
hypothesis of the shape $\pcx(n)\le Cn$ with $C>1$ yields no periodic factor at
all, and so gives Lemma~\ref{lem:gapP} nothing to act on. Sharpening the
linear-forms input would not move the bound past $n+1$. A different arithmetic
input does: \cite{Bug26} dispenses with the passage through periodicity
altogether, at the cost of effectivity (Section~\ref{sub:added}).
\end{remark}

\begin{remark}[The threshold $m_{0}(n)$, numerically]\label{rem:constB}
By Remark~\ref{rem:explicit} the coefficient
$4\bwC(4,1)n^{2}\log 3/\log 2$ is at most $3.14\times10^{16}\,n^{2}$, so
$m_{0}(n)$ is the least $m$ with
$m\ge 3.14\times10^{16}\,n^{2}\log(2m+n)+3n+2$; the left side then stays ahead
for all larger $m$, since $3.14\times 10^{16}n^{2}/m<\tfrac12$ there. Solving,
\[
m_{0}(1)<1.33\times 10^{18},\qquad
m_{0}(n)<2\times 10^{18}\,n^{2}\,(1+\log n)\quad(n\ge 1).
\]
Thus $\pcx_{3^{m}}(n)\ge n+1$ holds for every $m\ge m_{0}(n)$ with $m_{0}$ as
above. As in Remark~\ref{rem:constA} the magnitude is inherited from
$\bwC(4,1)$, and the thresholds are far beyond direct computation; the
$n^{2}\log$ shape, not the numerical value, is what the linear-forms input
dictates.
\end{remark}

\noindent
Thresholds across a range of factor lengths:

\begin{center}
\footnotesize
\renewcommand{\arraystretch}{1.15}
\setlength{\tabcolsep}{7pt}
\begin{tabular}{@{}rrrr@{}}
\hline
$n$ & $m_{0}(n)$ & $m_{0}(n)/n^{2}$ & binary digits of $3^{m_{0}(n)}$\\
\hline
$1$   & $1.33\times10^{18}$ & $1.33\times10^{18}$ & $2.1\times10^{18}$\\
$2$   & $5.49\times10^{18}$ & $1.37\times10^{18}$ & $8.7\times10^{18}$\\
$3$   & $1.26\times10^{19}$ & $1.40\times10^{18}$ & $2.0\times10^{19}$\\
$5$   & $3.58\times10^{19}$ & $1.43\times10^{18}$ & $5.7\times10^{19}$\\
$10$  & $1.48\times10^{20}$ & $1.48\times10^{18}$ & $2.3\times10^{20}$\\
$20$  & $6.08\times10^{20}$ & $1.52\times10^{18}$ & $9.6\times10^{20}$\\
$50$  & $3.95\times10^{21}$ & $1.58\times10^{18}$ & $6.3\times10^{21}$\\
$100$ & $1.62\times10^{22}$ & $1.62\times10^{18}$ & $2.6\times10^{22}$\\
\hline
\end{tabular}
\end{center}

\noindent
The third column exhibits the shape: $m_{0}(n)$ is very nearly
$1.4\times10^{18}\,n^{2}$ across the whole range, the slow drift---about $22\%$
from $n=1$ to $n=100$---being the residual $\log$ factor of
Lemma~\ref{lem:core}. The last column records how long the digit string of
$3^{m}$ must be before the theorem applies, which is the figure
Remark~\ref{rem:ballpark} takes up for $n=10$.

\begin{remark}[A ballpark at $n=10$, and how the gap divides]\label{rem:ballpark}
It is worth seeing what such a threshold means concretely. Take $n=10$. Solving
the inequality of Remark~\ref{rem:constB} gives
\[
m_{0}(10)\ <\ 1.48\times 10^{20},
\]
at which point $3^{m}$ has some $2.3\times 10^{20}$ binary digits---about
$7.0\times 10^{19}$ decimal digits, or $29$ exabytes merely to write down.
Counting from $1$ to $m_{0}(10)$ at one step per nanosecond would take some
$4700$ years. The threshold is thus entirely out of computational reach and will
remain so.

The truth is some nineteen orders of magnitude smaller. Direct computation gives
$\pcx_{3^{m}}(10)\ge 11$ already at $m=13$, and at every $m$ from $13$ to $899$,
as far as we have checked. Indeed $m=13$ is the earliest value at which the
inequality is so much as satisfiable: $\Mtop(3^{13})=20$ leaves exactly the
eleven positions $0,\dots,10$ at which to read a factor of length $10$, and the
eleven factors read there are distinct. For $n=10$, then, the conclusion holds
from the first moment it is not vacuous.

The gap divides unevenly between the two inputs. About sixteen and a half orders
of magnitude are charged to $\bwC(4,1)$ alone: replacing it by the absurd value
$1$ in Lemma~\ref{lem:core} would already bring $m_{0}(10)$ down to roughly
$6.0\times 10^{3}$. The remaining two and a half orders are charged to the
method, which is lossy even with a perfect transcendence constant---the periodic
windows that Lemma~\ref{lem:gapP} fails to exclude are far longer than any the
digits of $3^{m}$ appear to contain. Sharpening the linear-forms input would
therefore improve the threshold almost proportionally, but nothing short of a
different mechanism brings it near $13$.

It is natural to ask whether a sharper transcendence input would help, and by how
much. The literature offers a definite ladder. Matveev's theorem \cite{Mat00}
replaces the $n^{2n}$-type factor of \cite{BW93} by a pure exponential
$30^{n+3}$, worth roughly a factor $325$ at $n=4$, $d=1$; that alone would bring
$m_{0}(10)$ down to about $4\times 10^{17}$. Sharper still are the
few-logarithm specialists: Laurent's two-logarithm bound \cite{Lau08}, Rhin's
Pad\'e bound for the specific pair $(2,3)$ \cite{Rhi87}, and the
three-logarithm kit of Mignotte and Voutier \cite{MV24}, which runs some
$10^{3}$--$10^{4}$ below the closed-form $n=3$ engines.

The difficulty is that the sharpest of these are unavailable to us for a
structural, not a practical, reason. The form $\Lambda$ of Step 7 has four
logarithms and cannot be shortened. Two of them are the fixed $\log 2$ and
$\log 3$; the other two are not optional. The term $\log A_{1}'$ carries the
digit structure---$A_{1}'$ is the very quantity whose size the argument
bounds, and its height grows like $\Mtop$---while $\log(2^{p}-1)$ is the
geometric-series denominator that Lemma~\ref{lem:permod} forces. So $n\ge 3$
always, and $n=4$ as soon as $p\ge 2$: the two-logarithm engines, Rhin's
included, are out of reach however much effort is spent on them. The one
exception is instructive. At $p=1$ we have $2^{p}-1=1$, the leading term
vanishes, and the run estimate behind Theorems~\ref{thm:stewart}
and~\ref{thm:blocks} is genuinely a three-logarithm form, to which \cite{MV24}
applies in principle. Theorem~\ref{thm:aperiodic} for $p\ge 2$, and
Theorem~\ref{thm:complexity}, which needs every $p\le n$, gain nothing from it.

A second obstacle attaches to the instance-tuned kits specifically.
\cite{MV24} does not supply a closed-form constant: for a given coefficient
vector one runs a parameter search, and the theorem returns a per-instance
trichotomy. Our vector $(1,m,-y',-1)$ varies with $m$, and the conclusion is
asymptotic in $m$, so no single certified instance suffices; one would have to
stratify $m$ into ranges and certify each, obtaining a threshold with
range-dependent constants, or else find a uniform version. Compounding this,
one of our $\alpha_{i}$---again $A_{1}'$---has height growing with $m$, whereas
such kits are calibrated for fixed $\alpha_{i}$ and growing coefficients.

Finally, the arithmetic input is not the whole gap. Bringing $m_{0}(10)$ down to
$10^{9}$, where a computation might conceivably reach it, would need
$\bwC(4,1)$ replaced by about $7\times 10^{4}$---a reduction by a factor
$7\times 10^{10}$---and $10^{6}$ would require going below $110$; nothing in the
present literature is within many orders of magnitude of either, and the
residual factor of some $460$ identified above would survive a perfect constant
anyway. The realistic assessment is that a sharper engine would move these
thresholds by a few orders of magnitude without bringing them into computational
range. Matveev is the immediately available one, and substituting it is a
bounded piece of work, since it differs from \cite{BW93} chiefly in its height
normalization rather than in its shape; we have not carried it out here.

The numerical values in this remark are ordinary computations, reported to
calibrate expectations; they are not part of the formalization.
\end{remark}

Theorem~\ref{thm:complexity} is the analogue, for the finite digit word of a
single integer $3^{m}$, of the Morse--Hedlund rung for the infinite steering word
of $(3/2)^{n}$. For that word a superlinear bound is available \cite{RS26}; no
such strengthening is claimed here, and Remark~\ref{rem:calibration} explains why
the present method cannot reach one. Superlinear complexity for the digits of
$3^{m}$ remains a separate and harder question.

%======================================================================
\section{Scope: other bases and other sequences}\label{sec:scope}
%======================================================================

The number $3$ plays almost no role in the proofs of
Theorems~\ref{thm:aperiodic} and~\ref{thm:complexity}, and the base $2$ enters
through exactly one arithmetic fact. This section records what the argument
consumes, the generality it yields, and where it stops. Nothing here is claimed
as a theorem of this paper: unlike every statement of
Sections~\ref{sec:intro}--\ref{sec:complexity}, the assertions below are
\emph{not} formalized, and we have not written out the proofs they would need.

\subsection{What the proof uses about the pair $(3,2)$}

Fix integers $a,b\ge 2$ and consider the base-$b$ digits of $a^{m}$; this paper
is the case $(a,b)=(3,2)$. Walk through the proof of Lemma~\ref{lem:gapP}.

Steps 1, 2, 3, 5, 6 and 7 use nothing about $a$ and $b$ beyond $b\ge 2$. In
particular Lemma~\ref{lem:permod} holds verbatim with $2$ replaced by $b$: the
repeated block is $c=\lfloor N/b^{x}\rfloor\bmod b^{p}$, the cleared identity is
\[
(b^{p}-1)\bigl(N\bmod b^{x+tp}\bigr)
=(b^{p}-1)\bigl(N\bmod b^{x}\bigr)+c\,\bigl(b^{x+tp}-b^{x}\bigr),
\]
and the proof is the same two inductions. (The Lean version is written for
$b=2$, but uses nothing about $2$.) Step 3 becomes
$(b^{p}-1)a^{m}=A_{1}'b^{y'}+E$ with $E=(b^{p}-1)A_{2}-c\,b^{x}$, and Step 6
gives $|R-1|\le b^{\,x+1-\Mtop}\le b^{-1}\le\tfrac12$, using
$b^{p}-1\ge b^{p-1}$.

Step 8 uses only that $\log a^{m}=m\log a$ is an integer multiple of one fixed
logarithm. The four logarithms are those of $b^{p}-1$, $a$, $b$ and $A_{1}'$, so
the constant is still $\bwC(4,1)$: neither $n=4$ nor $d=1$ moves. Only the
height product changes, from $p\log 3\cdot 1\cdot(\Mtop+1+p-y')$ to a quantity of
size $\asymp p\,\log a\,(\log b)^{3}\,(\Mtop+1+p-y')$ when $b\ge 3$; at $b=2$ the
floors $\mh(\cdot)\ge 1$ are active instead, as in Step 8. The effect is on
$\kappa_{p}$ and hence on $C_{p}$, never on the shape $\log m/\log\log m$.

Step 4 is the only step with arithmetic content, and the only one where $a$ and
$b$ must be related at all. What it needs is
\begin{equation}\label{eq:prime}
\text{there is a prime }\ell\mid b\text{ with }\ell\nmid a .
\end{equation}
This is exactly the hypothesis of Lemma~\ref{lem:nondeg}, which was stated in
base $b$ for this reason: it gives $E\equiv-a^{m}\not\equiv 0\pmod{\ell}$, hence
$E\neq 0$, for every $x\ge 0$. For $(a,b)=(3,2)$ the prime is $\ell=2$ and the
congruence is the parity computation of Step 4.

\subsection{The resulting generality}

Under \eqref{eq:prime} the argument goes through unchanged and yields, for every
fixed $p\ge 1$,
\[
\begin{gathered}
\bcount_{p}^{(b)}(a^{m})\ \ge\ \frac{\log m}{\log\log m+C_{p}(a,b)}-2,\\[4pt]
\pcx^{(b)}_{a^{m}}(n)\ \ge\ n+1\quad\bigl(m\ge m_{0}(n,a,b)\bigr),
\end{gathered}
\]
where $\bcount^{(b)}_{p}$ and $\pcx^{(b)}$ are the base-$b$ analogues and the
constants are explicit exactly as in Remarks~\ref{rem:constA}
and~\ref{rem:constB}. The combinatorial half of Theorem~\ref{thm:complexity}
needs no change whatever: Lemma~\ref{lem:fmh} is stated and proved over an
arbitrary alphabet with decidable equality---as is its Lean form---so it applies
to base-$b$ digits as it stands.

Hypothesis \eqref{eq:prime} is not an artifact of the write-up. It implies that
$a$ and $b$ are multiplicatively independent, since $a=g^{i}$ and $b=g^{j}$
would give $a$ and $b$ the same prime divisors; and some such hypothesis is
necessary, because for $a=b^{k}$ the integer $a^{m}=b^{km}$ has a single nonzero
base-$b$ digit, so $\bcount_{p}^{(b)}(a^{m})=1$ for every $m$ and
Theorem~\ref{thm:aperiodic} is simply false. In the intermediate case where $a$
and $b$ are multiplicatively independent but every prime of $b$ divides
$a$---say $(a,b)=(6,2)$, the degeneracy exhibited in
Remark~\ref{rem:nondeg}(i)---it is the proof that fails, not the conclusion:
$6^{m}=2^{m}3^{m}$ ends in $m$ binary zeros, so the deep windows on which Step 4
operates carry no information, whereas the conclusion survives by the shift,
the digits above position $m$ being those of $3^{m}$. A statement adapted to
that case should concern the digits above the trailing zeros, and our windowing,
which reaches down to $x\ge 1$, is not adapted to it.

\subsection{How far the sequence can be moved}

Nothing in Steps 1--7 uses that the integer being expanded is a power. The
constraint sits in Step 8, and it is sharp: the linear form needs $\log u$ to be
a $\Z$-combination, with coefficients of size $O(\log u)$, of the logarithms of
a \emph{fixed finite} set of algebraic numbers.

\emph{$S$-units.} Let $S$ be a fixed finite set of primes and let $u$ range over
the positive integers with all prime factors in $S$. Writing
$u=\prod_{q\in S}q^{e_{q}}$, with $e_{q}\le\log u/\log 2$, the form becomes
\[
\Lambda=\log(b^{p}-1)+\sum_{q\in S}e_{q}\log q-y'\log b-\log A_{1}' ,
\]
of $|S|+3$ terms; \eqref{eq:prime} becomes the requirement that some prime of
$b$ lie outside $S$. Everything else is unchanged, and the conclusion takes the
form $\bcount_{p}^{(b)}(u)\gg\log\log u/(\log\log\log u+C)$, with
$\bwC(|S|+3,1)$ in place of $\bwC(4,1)$. This is the natural home of the
argument. It is also the setting of Bugeaud--Kaneko \cite{BK17}, whose
Corollary~1.5 is quoted in Theorem~\ref{thm:blocks} for exactly this reason; the
increment over $S$-units would be the periodic refinement, $\bcount_{p}$ in
place of $\tcount$.

\emph{Everything else.} If $u$ is not an $S$-unit then $\log u$ is not a short
combination of fixed logarithms, and $u$ must itself enter the form as one of
the $\alpha_{i}$. Its modified height is then $\asymp\log u$, the right-hand
side of \eqref{eq:lower} degrades from $O(\log m)$ to $O(m\log m)$, and the
contradiction of Step 9---which pits it against the linear growth of
$\Mtop\asymp m$---evaporates. This is the real boundary of the method, and it is
a height obstruction rather than a defect of the exposition.

Linear recurrence sequences sit instructively in between. For a non-degenerate
binary recurrence with dominant root $\alpha$ one has
$u_{m}=c\,\alpha^{m}+O(|\beta|^{m})$ with $|\beta|<|\alpha|$, so $\log u_{m}$ is
a combination of the fixed logarithms $\log c$ and $\log\alpha$ up to an
exponentially small error, at the cost of working in $\Q(\alpha,\beta)$ and
hence with $\bwC(n,d)$ for $d>1$. That is the setting of Stewart's Theorem~2 in
\cite{Ste80}, which is stated for recurrence sequences and specialized to
$u_{n}=3^{n}$ in Theorem~\ref{thm:stewart}; so the classical rungs,
Theorems~\ref{thm:stewart} and~\ref{thm:blocks}, are available there. What is
\emph{not} immediate is Lemma~\ref{lem:gapP}. Its Step 3 is an exact identity
between integers, and for a recurrence that is not an $S$-unit it would have to
be replaced by an approximate one, with the subdominant-root error carried
through Steps 6--9; and the parity argument of Step 4 would need a substitute,
since $E$ would no longer be a single congruence away from zero. We see no
obstruction in principle, but we have not carried it out. Sequences such as
$\lfloor(3/2)^{m}\rfloor$, whose logarithms are not linear forms at all, lie
outside the method entirely.

\subsection{What is genuinely specific to $3$}

Only the sparse side. Theorem~\ref{thm:DH}---the powers of three with at most
$22$ nonzero binary digits---is a statement about the pair $(3,2)$ and an
instance of a conjecture of Erd\H{o}s; it is not the specialization of anything
general, and Theorem~\ref{thm:sparse} inherits that. By contrast
Theorems~\ref{thm:aperiodic} and~\ref{thm:complexity}, and the run estimates
driving them, see the number $3$ only through the quantity $\log 3$ in their
constants.

\section{Acknowledgements}
The author utilized Claude Code as an AI coding assistant to aid in the Lean 4 formalization of the proofs presented in this paper. The author directed and reviewed all generated code and takes full responsibility for the mathematical integrity and final content of the work.

%======================================================================
\appendix
\section{Formalization}\label{app:lean}
%======================================================================

\sloppy
Every statement in Sections~\ref{sec:intro}--\ref{sec:complexity}---the
classical bounds reviewed in Section~\ref{sub:known} as well as the new
Theorems~\ref{thm:aperiodic} and~\ref{thm:complexity}---was verified in Lean~4.
There are two exceptions, both flagged where they occur.
Section~\ref{sec:scope} is discussion: none of the generalizations sketched
there has been formalized or written out in full. And
Remark~\ref{rem:nondeg}(ii)---that the hypothesis $x\ge 1$ may be dropped, so
that Theorem~\ref{thm:aperiodic} would carry the offset $-1$---is an
observation only: the Lean \texttt{gap\_principle} retains $1\le x$, and the
verified form of Theorem~\ref{thm:aperiodic} is the one stated in this paper.
Lemma~\ref{lem:nondeg} itself appears in Lean only inline, as the step
\texttt{hEr\_ne} of \texttt{gap\_principle}, in the base-$2$, $x\ge 1$ case. The files live under
\texttt{TH/DigitBlocks/}, with \texttt{TH/StewartDigits.lean},
\texttt{TH/BakerInterface.lean} (the modified height over $\Q$),
\texttt{ForMathlib/Data/Nat/BinaryDigits.lean} (the bit-manipulation lemmas of
Lemma~\ref{lem:permod}),
\texttt{ForMathlib/Combinatorics/SubwordComplexity.lean},
\texttt{CITED/BakerWustholz.lean} and \texttt{CITED/DimitrovHowe.lean} supplying
inputs. ``std3'' abbreviates the ambient axioms of classical Lean (propositional
extensionality, choice, quotient soundness); ``[BW93]'' marks additional reliance
on the cited Baker--Wüstholz axiom \texttt{BakerWustholz.linearForms\_logs}, and
``[DH23]'' on the cited Dimitrov--Howe axiom
\texttt{DH.three\_pow\_three\_binary\_digits}. Every entry is fully proved in Lean
except the two marked \emph{cited axiom}.

A word on the constants of Remarks~\ref{rem:explicit}--\ref{rem:constB} and their
formal counterparts. \texttt{BakerWustholz.C n d} is \emph{defined} by the closed
formula of Remark~\ref{rem:explicit}, so the cited axiom
\texttt{linearForms\_logs} is recorded in its explicit form and every Lean
statement downstream carries that value. Theorem~\ref{thm:aperiodic}
(\texttt{breakCount\_three\_pow\_lower}) and Proposition~\ref{prop:endgame}
(\texttt{windowCount\_lower\_bound\_gen}) are nevertheless \emph{stated} with an
existentially quantified constant, $\exists\,C>0$; the witness their proofs supply
is exactly $\max\bigl(\log\frac{\kappa+2}{\log 2},1\bigr)$ with
$\kappa=\kappa_{p}$, which is what Remark~\ref{rem:constA} evaluates. Similarly
Theorem~\ref{thm:complexity} (\texttt{three\_pow\_complexity\_ge}) is stated with
Mathlib's eventually-filter \texttt{$\forall^{f}$ m in atTop}, the threshold being
the one computed in Remark~\ref{rem:constB}. The numerical values quoted in this
paper are therefore the ones the formal proofs produce, but they are read off from
the proof terms rather than appearing in the Lean statements.

All Lean-4 files are available from the repository
\texttt{https://github.com/rwst/Aperiodicity-and-Subword-Complexity}

\subsection{Formalizing the classical bounds}\label{app:classical}

The classical statements reviewed in Section~\ref{sub:known} account for the
larger part of the development: roughly $1\,300$ lines of Lean in
\texttt{StewartDigits}, \texttt{Transitions}, \texttt{SparseSide} and
\texttt{Defs}, against some $800$ for the new Theorems~\ref{thm:aperiodic}
and~\ref{thm:complexity} (\texttt{Aperiodicity}, \texttt{Complexity},
\texttt{SubwordComplexity}) and a further $1\,400$ of shared infrastructure
(\texttt{GapPrinciples}, \texttt{BinaryDigits}, \texttt{BakerInterface}). Three
aspects of that effort seem worth recording.

\smallskip
\noindent\emph{(a) The digit dictionary.} Mathlib provides both
\texttt{Nat.digits} (the digit list) and \texttt{Nat.testBit} (bit access), but
no bridge between them---and the two are needed at opposite ends of the
argument. Runs, blocks and periodic windows are naturally statements about
\texttt{testBit}, whereas $\sd$ is naturally a sum over \texttt{Nat.digits 2}.
\texttt{ForMathlib/Data/Nat/BinaryDigits.lean} supplies the missing dictionary:
peeling a single bit or a whole $p$-block off a remainder
(\texttt{mod\_two\_pow\_succ}, \texttt{mod\_two\_pow\_add}), collapsing a
constant run (\texttt{mod\_two\_pow\_eq\_of\_testBit\_eq\_false} and its
all-ones companion), and \texttt{sum\_digits\_two\_eq\_sum\_testBit}, which
identifies the digit-list sum with $\sum_{i<k}\bit{i}(n)$ whenever $n<2^{k}$. A
second dictionary, \texttt{TH/BakerInterface.lean}, computes the
Baker--W\"ustholz modified height over $\Q$; the identity $\mh(2)=1$ recorded in
Section~\ref{sec:tools} is \texttt{mh\_two} there.

\smallskip
\noindent\emph{(b) Stewart's counting is a lemma, not a pattern.} The pigeonhole
endgame is stated once and instantiated three times.
\texttt{windowCount\_lower\_bound} takes the counting function
$Q\colon\N\to\N$ as a parameter and the gap principle as a
\emph{hypothesis}---the implication ``$\theta$ clears the threshold
$\Rightarrow\lfloor\log_{\theta}\Mtop\rfloor\le Q(m)$''---and returns
$Q(m)\ge\log m/(\log\log m+C)-1$. Because the transcendence input sits entirely
inside that hypothesis, the lemma's own axiom footprint is std3: no Baker
content at all. Theorem~\ref{thm:stewart} is its instantiation at
$Q(m)=\sd(3^{m})$, Theorem~\ref{thm:blocks} at $Q(m)=\tcount(3^{m})$, and
Theorem~\ref{thm:aperiodic} at $Q(m)=\bcount_{p}(3^{m})+1$ through the variant
\texttt{windowCount\_lower\_bound\_gen}, which abstracts the coefficient
$\kappa$ as well. This is the formal counterpart of Remark~\ref{rem:division}:
one counting argument, three consumers, and the [BW93] dependency entering at
exactly one point in each.

The step that informal exposition compresses into ``the windows inject into the
break set'' is the most laborious part of the whole file. In Lean it is a family
of existentials, one per window, discharged by \texttt{choose} into a function
$j\mapsto i_{j}$; a strict antitonicity lemma, $a<b\Rightarrow i_{b}<i_{a}$,
read off the window geometry by \texttt{omega}; injectivity by trichotomy; and
finally \texttt{Finset.card\_le\_card\_of\_injOn} against the filtered
\texttt{Finset.range}\,$\Mtop$ that \emph{defines} the counter. Steps
(W1)--(W4) of Section~\ref{sec:aperiodic} are a transcription of that block.
Theorem~\ref{thm:blocks} needs one further discrete step,
\texttt{exists\_transition}: a window holding both a set and a clear bit holds
two adjacent unequal bits.

\smallskip
\noindent\emph{(c) The rational instance simplifies the classical proofs.} Two
steps of \cite{Ste80} turn out to be unnecessary once the recurrence is
$u_{n}=3^{n}$ and the base is $2$. Stewart's degenerate case $\Lambda=0$ is
excluded by parity---the low part $A_{2}$ of $3^{m}$ is odd, hence nonzero, so
$3^{m}>A_{1}2^{y}$ strictly---which removes the appeal to his Lemma~2
(Loxton--van der Poorten) and to the irrationality of $\log 2/\log 3$; and the
one-sided estimate $\log(1+u)\le u$ replaces his inequality~(10), removing both
of his ``sufficiently large'' conditions, so the formalized bound holds for
every $m\ge 2$ rather than eventually. That parity observation, transplanted
from constant runs to periodic ones, is precisely Lemma~\ref{lem:nondeg} of this
paper. In the same spirit Lemma~\ref{lem:fmh} is proved by
determinism propagation rather than through Fine--Wilf, which the classical
route uses: this keeps it self-contained and, as
Section~\ref{sec:scope} notes, alphabet-general.

\subsection{Statement index}\label{app:index}

\medskip
\noindent\emph{Reviewed results and cited inputs $(\S\ref{sub:known},\S\ref{sec:tools})$.}
\begin{center}
\scriptsize
\renewcommand{\arraystretch}{1.25}
\setlength{\tabcolsep}{3pt}
\begin{tabular}{@{}llll@{}}
\hline
Statement & Lean identifier & File & Status\\
\hline
Def.~\ref{def:transition} & \texttt{transitionCount} & \texttt{Defs} & def\\
Thm.~\ref{thm:stewart} & \texttt{stewart\_digitSum\_three\_pow} & \texttt{StewartDigits} & std3 + [BW93]\\
Thm.~\ref{thm:blocks} & \texttt{transitionCount\_three\_pow\_lower} & \texttt{Transitions} & std3 + [BW93]\\
--- & \texttt{transitionCount\_three\_pow\_tendsto\_atTop} & \texttt{Transitions} & std3 + [BW93]\\
Thm.~\ref{thm:DH} & \texttt{DH.three\_pow\_three\_binary\_digits} & \texttt{DimitrovHowe} & cited axiom [DH23]\\
Thm.~\ref{thm:sparse} & \texttt{digitSum\_three\_pow\_le\_two} & \texttt{SparseSide} & std3\\
--- & \texttt{digitSum\_three\_pow\_eq\_one/\_two} & \texttt{SparseSide} & std3\\
--- & \texttt{digitSum\_three\_pow\_eq\_three} & \texttt{SparseSide} & std3 + [DH23]\\
Thm.~\ref{thm:bw} & \texttt{BakerWustholz.linearForms\_logs} & \texttt{BakerWustholz} & cited axiom [BW93]\\
--- & \texttt{gap\_bound\_ones} & \texttt{GapPrinciples} & std3 + [BW93]\\
--- & \texttt{windowCount\_lower\_bound} & \texttt{GapPrinciples} & std3\\
\hline
\end{tabular}
\end{center}

\medskip
\noindent\emph{New results and their tools $(\S\ref{sec:tools}$--$\S\ref{sec:complexity})$.}
\begin{center}
\scriptsize
\renewcommand{\arraystretch}{1.25}
\setlength{\tabcolsep}{3pt}
\begin{tabular}{@{}llll@{}}
\hline
Statement & Lean identifier & File & Status\\
\hline
Def.~\ref{def:transition} & \texttt{breakCount} & \texttt{Aperiodicity} & def\\
Lem.~\ref{lem:permod} & \texttt{Nat.periodic\_mod\_identity} & \texttt{BinaryDigits} & std3\\
--- & \texttt{Nat.periodic\_chunk\_eq} \eqref{eq:block} & \texttt{BinaryDigits} & std3\\
--- & \texttt{Nat.mod\_two\_pow\_add} \eqref{eq:peel} & \texttt{BinaryDigits} & std3\\
Lem.~\ref{lem:nondeg} & \texttt{gap\_principle} (\texttt{hEr\_ne}) & \texttt{GapPrinciples} & std3, inline\\
Lem.~\ref{lem:gapP} & \texttt{gap\_principle} & \texttt{GapPrinciples} & std3 + [BW93]\\
Prop.~\ref{prop:endgame} & \texttt{windowCount\_lower\_bound\_gen} & \texttt{GapPrinciples} & std3\\
Lem.~\ref{lem:tendsto} & \texttt{tendsto\_atTop\_of\_lower\_bound} & \texttt{GapPrinciples} & std3\\
Thm.~\ref{thm:aperiodic} & \texttt{breakCount\_three\_pow\_lower} & \texttt{Aperiodicity} & std3 + [BW93]\\
--- & \texttt{breakCount\_three\_pow\_tendsto\_atTop} & \texttt{Aperiodicity} & std3 + [BW93]\\
--- & \texttt{subwordComplexity} & \texttt{SubwordComplexity} & def\\
Lem.~\ref{lem:fmh} & \texttt{finite\_morse\_hedlund} & \texttt{SubwordComplexity} & std3\\
Lem.~\ref{lem:core} & \texttt{log\_le\_of\_lowComplexity} & \texttt{Complexity} & std3 + [BW93]\\
Thm.~\ref{thm:complexity} & \texttt{three\_pow\_complexity\_ge} & \texttt{Complexity} & std3 + [BW93]\\
\hline
\end{tabular}
\end{center}

\medskip
The sanity witnesses $\tcount(243)=2$ (three blocks), $\sd(243)=6$,
$\sd(27)=4$ and $\sd(81)=3$ are verified by decision procedures
(\texttt{transitionCount\_three\_pow\_five}, \texttt{digitSum\_three\_pow\_five},
\texttt{digitSum\_three\_pow\_three}, \texttt{digitSum\_three\_pow\_four}).

% omitted: private helpers folded into the proof sketches ---
%   Aperiodicity: Bp, kappa, kappa_pos, hmaxbound, window_break, log_le_breakCount;
%   Complexity: linear_dominates_log (the linear-beats-log domination in Thm B);
%   Transitions: exists_transition, window_clear, window_has_transition,
%     log_le_transitionCount;
%   StewartDigits: window_hit (Stewart's zero-run estimate), gap_bound,
%     log_le_sum_digits, three_pow_mod_two, le_log_two_three_pow,
%     log_two_three_pow_le, stewart_digitSum_three_pow_sharp ([BK17] constant);
%   SparseSide: digitSum_two_eq_zero, digitSum_two_le_one, digitSum_two_eq_two,
%     three_pow_ne_two_pow_succ;
%   SubwordComplexity: factor, Ccount, factor_eq_iff, factor_castSucc,
%     image_factor_castSucc, Ccount_mono, det_of_Ccount_eq, periodic_extend;
%   BinaryDigits: testBit_eq_decide_div_mod, mod_two_pow_succ, chunk_shift,
%     mod_two_pow_eq_of_testBit_eq_false/_true, sum_digits_two_eq_sum_testBit;
%   BakerInterface: modifiedHeight_rat, norm_log_ratCast, mh_two, mh_three,
%     mh_intCast (the height bounds of Step 8).

%======================================================================

\end{document}